    \newcolumntype{L}{>{$}l<{$}} 
\numberwithin{equation}{section}
\newtheorem{theorem}{Theorem}[section]
\newtheorem{proposition}[theorem]{Proposition}
\newtheorem{corollary}[theorem]{Corollary}
\newtheorem{lemma}[theorem]{Lemma}
\theoremstyle{definition}
\theoremstyle{remark}
\newtheorem{remark}{Remark}
\newcommand\bigsubset[1][1.19]{%
   \mathrel{\vcenter{\hbox{\scalebox{#1}{$\subset$}}}}}
\newcommand{\FF}{\mathbb{F}}
\newcommand{\ZZ}{\mathbb{Z}}
\newcommand{\NN}{\mathbb{N}}
\newcommand{\lcm}{\mathrm{lcm}}
\newcommand{\ps}[1]{\left( #1 \right)}
\title
{Product Sets of Arithmetic Progressions  in Function Fields}
\author{Lior Bary-Soroker}
\address{School of Mathematical Sciences, Tel Aviv University, Tel Aviv 69978, Israel}
\email{barylior@tauex.tau.ac.il}
\author{Noam Goldgraber}
\address{School of Mathematical Sciences, Tel Aviv University, Tel Aviv 69978, Israel}
\email{noam3goldgraber@gmail.com}
\date{\today}
\begin{document}

\begin{abstract}
    We study product sets of finite arithmetic progressions of polynomials over a finite field. We prove a lower bound for the size of the product set, uniform in a wide range of parameters. We apply our results to resolve the function field variants of Erd\H{o}s' multiplication table problem.
\end{abstract}

\maketitle

\section{Introduction}
    \subsection{Main Result}
    Xu and Zhou \cite{product_sets_of_APs} establish a general lower bound for the size of the product set of arithmetic progressions:   Let $\mathcal{A}_1$, $\mathcal{A}_2 \subset \ZZ$ be two finite arithmetic progressions with lengths $2 \leq |\mathcal{A}_1| \leq |\mathcal{A}_2|$. Then
    \begin{equation}\label{XuZhou}
               |\mathcal{A}_1 \cdot \mathcal{A}_2| \geq |\mathcal{A}_1||\mathcal{A}_2|(\log |\mathcal{A}_1|)^{-\delta/2 - o(1)}(\log |\mathcal{A}_2|)^{-\delta/2 - o(1)},
    \end{equation}
        as $ |\mathcal{A}| \to \infty$, where 
    \begin{equation}
        \delta = 1-\frac{1+\log \log 2}{\log 2} = 0.08607\ldots  
    \end{equation}
    This implies the Elekes-Ruzsa conjecture, and is
    motivated by Erd\H{o}s' Multiplication Table Problem (MTP) and Ford's \cite{Ford_MT} resolution of it. We shall expand on the motivation below.
    
    We work in the function field setting, in which the ring of integers is modeled by the ring of polynomials $\FF_q[T]$ over a finite field $\FF_q$ with $q$ elements. Our objective is to improve the lower bound of Xu and Zhou in the function field setting (even though the Xu-Zhou result was not transferred to function fields, to the best of our knowledge). To obtain this goal, we restrict the uniformity of the arithmetic progressions. 
    
    To state our results, we introduce some notation. We denote by $\mathcal{M}_b\subseteq \FF_q[T]$ the set of monic polynomials of degree $b$. For polynomials $F,G \in \FF_q[T]$, we use the standard notation $(F,G) := \gcd (F,G)$, $[F,G] = \lcm(F,G)$. Given a polynomial $M\in \mathcal{M}_n$, we let $|M|=q^{\deg M}$, $\Phi(M) = \#\{ A: \deg A<\deg M, (A,M)=1\}$, and $P_j(M)$ the number of distinct monic irreducible divisors of $M$ of degree $j$.
    For two functions $f,g$, we write $f\gg g$  if there exists a constant $C>0$ such that $|f(x)|\geq C|g(x)|$ for all $x$. Moreover, we write $f(x) \asymp g(x)$ if $f(x) \ll g(x)$ and $f(x) \gg g(x)$. If the constant $C$ depends on some parameters, say $a,b,\ldots$, we indicate this by writing $f\gg_{a,b,\ldots} g$ and $f\asymp_{a,b,\ldots} g$ and we mean that it depends \emph{ only} on these parameters.

    In the function field setting, an arithmetic progression of modulus $M\in \FF_q[T]$ is defined as $\{ G \in \FF_q[T]: G\equiv A \mod M\}$. The logarithmic function is expressed as $\log_{q}$, which is analogous to the natural logarithm in \eqref{XuZhou}. 

    \begin{theorem}\label{thm: divisors on APs}
        Let $0 < \epsilon$, $0 < C$ and $1\leq b=b(n)\leq n/2$. Let $M_1,M_2, A_1,A_2 \in \FF_q[T]$ and $M=[M_1,M_2]$ such that $(A_i,M)=1$.
        Let $\Omega_i = \{ G\in \mathcal{M}_{b_i}: G \equiv A_i\mod M_i\}$ where $b_1=b$ and $b_2=n-b$. 
        Assume that 
        \begin{enumerate}[label={\arabic*.}]
            \item $\deg M_1 , \deg M_2\leq (1/2-\epsilon) b$.
            \item For all $j\in\NN$, we have $P_j(M)\leq C \frac{q^j}   {j^{1+\epsilon}}$. \label{eq:consition_Pj}
        \end{enumerate}
        Then, 
        \begin{equation}\label{mainresult}
            |\Omega_1\cdot \Omega_2| \gg_{C,\epsilon} \frac{q^n}{\Phi(M_1)  \Phi(M_2)}\cdot \frac{1}{b^{\delta}(\log b)^{\frac32}},
        \end{equation}
        as $q^n\to \infty$, where the implied constant depends only on $C$ and $\epsilon$ (and not on $q,M_1,M_2,n,b$).
    \end{theorem}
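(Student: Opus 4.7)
The strategy is the second moment method via Cauchy--Schwarz. Define the representation function
\[
r(P) = \#\{(G_1, G_2) \in \Omega_1 \times \Omega_2 : G_1 G_2 = P\}.
\]
Then $\sum_P r(P) = |\Omega_1||\Omega_2| = q^{n - \deg M_1 - \deg M_2}$, and Cauchy--Schwarz on the support of $r$ yields $|\Omega_1 \cdot \Omega_2| \geq (|\Omega_1||\Omega_2|)^2/E$ where $E = \sum_P r(P)^2$. Matching this against the target bound reduces the theorem to the energy estimate
\[
E \ll_{C,\epsilon} \frac{q^n \, \Phi(M_1) \Phi(M_2)}{|M_1|^2 |M_2|^2} \cdot b^{\delta} (\log b)^{3/2}.
\]

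To bound $E$, I would count the contributing quadruples $(G_1, G_2, G_1', G_2')$ with $G_1 G_2 = G_1' G_2'$ via the standard parametrization $D = (G_1, G_1')$, $G_1 = DH$, $G_1' = DH'$ with $(H, H')=1$, $G_2 = H'K$, $G_2' = HK$, where $K$ is a uniquely determined monic polynomial of degree $n - 2b + \deg D$. The congruence hypotheses combined with $(A_i, M)=1$ then force $(D, M_1) = 1$, $(K, M_2) = 1$, and both $H$ and $H'$ to lie in a common residue class $c(D, K) \bmod M$ (coprime to $M$, else the contribution vanishes). Stratifying by $d = \deg D$, summing first over $D$ and $K$, and then applying M\"obius inversion to enforce $(H, H') = 1$, the inner count over $(H, H')$ collapses to a M\"obius sum over monic $F$ coprime to $M$ with weight $q^{-2 \deg F}$. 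I expect this sum to be controlled uniformly in $M$ precisely by the assumption $P_j(M) \leq C q^j/j^{1+\epsilon}$, via the associated Euler product $\prod_{P \nmid M}(1 - |P|^{-2})^{-1}$.

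The main obstacle is extracting the Ford exponent $b^{\delta}$ together with the $(\log b)^{3/2}$ factor: the direct M\"obius computation sketched above saves only a factor of $b$, which is too weak by $b^{1-\delta}/(\log b)^{3/2}$. Obtaining the sharp exponent requires a function field analog of Ford's upper bound on the second moment of the number of divisors in a dyadic interval, adapted to the arithmetic progression setting. Following the Ford--Koukoulopoulos--Xu--Zhou strategy, this is carried out by decomposing $(H, H')$ according to the joint distribution of their prime factors across dyadic degree ranges $[2^k, 2^{k+1})$, and showing that large values of $r(P)$ force an anomalous concentration of prime factors whose total contribution is governed by Erd\H{o}s--Kac-type statistics for polynomials in $\FF_q[T]$. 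The assumption $\deg M_i \leq (1/2 - \epsilon) b$ provides enough workspace below degree $b$ to run the dyadic decomposition, and the $P_j(M)$ hypothesis ensures that the prime-factor statistics of $H$ are not disrupted by restriction to a residue class modulo $M$. Combining the resulting energy bound with Cauchy--Schwarz and tracking the $\Phi(M_i)/|M_i|$ factors from the coprime residue-class restrictions on $D$ and $K$ yields the theorem.
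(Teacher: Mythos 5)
Your approach---Cauchy--Schwarz applied to the representation function $r$ and an upper bound on the energy $E=\sum_P r(P)^2$---is genuinely different from the paper's, and unfortunately it cannot work as stated. The paper proves the result by a direct lower-bound construction in the style of Ford: it exhibits many elements of $\Omega_1\cdot\Omega_2$ of the form $F=HPB$ with $H$ small and coprime to $M$, $P$ a prime of degree close to $b$, and $B$ a $b$-rough polynomial in the right residue class, then shows this representation is essentially unique so that counting triples suffices (Lemma~\ref{lm:main_lower_bound} and Proposition~\ref{lower_bound}).

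The gap in your proposal is quantitative and unfixable within the Cauchy--Schwarz framework you describe. Run your own gcd parametrization in the unconstrained case $M_1=M_2=1$: with $D=(G_1,G_1')$, $\deg D = b-m$, $\deg H=\deg H'=m$, $(H,H')=1$, $\deg K=n-b-m$, the number of coprime monic pairs of degree $m\geq 1$ is exactly $q^{2m}(1-q^{-1})$, so
\[
E \;=\; q^n + \sum_{m=1}^{b} q^{b-m}\,q^{n-b-m}\,q^{2m}(1-q^{-1}) \;=\; q^n\bigl(1 + b(1-q^{-1})\bigr) \;\asymp\; q^n\, b.
\]
This is not an upper bound one can improve---it is the true size of the energy. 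Hence Cauchy--Schwarz gives exactly $|\Omega_1\cdot\Omega_2| \geq (q^n)^2/E \asymp q^n/b$, whereas the theorem asserts $|\Omega_1\cdot\Omega_2| \gg q^n/(b^{\delta}(\log b)^{3/2})$, which is \emph{stronger} since $\delta = 0.086\ldots < 1$. Your target energy estimate $E\ll q^n b^{\delta}(\log b)^{3/2}\,\Phi(M_1)\Phi(M_2)/(|M_1|^2|M_2|^2)$ is therefore false already for trivial moduli, and no "function field analog of Ford's upper bound on the second moment" can rescue it: the second moment is genuinely of order $q^n b$, and the congruence constraints only scale both $(\sum r)^2$ and $E$ in the same way. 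This is the classical obstruction in the multiplication-table problem: naive Cauchy--Schwarz loses a factor of $b^{1-\delta}/(\log b)^{3/2}$ because the off-diagonal energy is dominated by a sparse set of highly-composite products. To recover the Ford exponent, one must either restrict $r$ to a structured subfamily of factorizations before applying Cauchy--Schwarz (at which point the argument becomes a disguised version of the direct construction) or, as the paper does, construct elements of the product set explicitly and use the inequality $\tau(H)^2 \leq L(H)W(H)$ internally (via the sets $\mathcal{A}(v)$ of Section~\ref{sec:avg_deg_div}) rather than at the level of the product set.
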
    

    In the setting of the theorem, an analog to \eqref{XuZhou} would give the lower bound:
    \[
        |\Omega_1\cdot \Omega_2|\geq \frac{q^n}{|M_1||M_2|} \cdot\frac{1}{b^{\delta/2+o(1)}(n-b)^{\delta/2+o(1)}}.
    \]
    Under condition~\ref{eq:consition_Pj}, $|M_i|\ll \Phi(M_i)\leq |M_i|$, so we can replace $|M_i|$ by $\Phi(M_i)$, and there is no difference there.
    The two main improvements in \eqref{mainresult} are the following. First, we make the $o(1)$ in the exponent explicit. Instead of $b^{o(1)}$, which is explicitly stated in \cite{product_sets_of_APs} as the integer analog to $(\log b)^{7+o(1)}$ -- we have $(\log b)^{3/2}$. Second, we have the term $b^{\delta}$ in the denominator that is larger than the term $b^{\delta/2}(n-b)^{\delta/2}$ appearing in \cite{product_sets_of_APs}. This improvement is significant whenever $b=o(n)$.
    We emphasize that Theorem~\ref{thm: divisors on APs} is uniform in $q$, that is, one may vary the finite field, a feature that does not arise in $\ZZ$.
    \begin{remark}
        Condition \ref{eq:consition_Pj}\ of Theorem~\ref{thm: divisors on APs} is valid for $j \geq \frac{\log(n)}{\log(q)}$, and by increasing the value of $C$, it can be made to hold for all $j \leq j_0$ for some fixed $j_0$. This condition is intended to prevent polynomials $M$ that are composed of many small primes.
    \end{remark}
    \subsection{Motivation and Applications}
    \subsubsection{The classical multiplication table problem}
    Let $A(x)$ be the number of positive integers $n \leq x$ that can be written as $n=m_1m_2$ with each $m_1,m_2 \leq \sqrt{x}$. In other words, $A(x)$ is the number of distinct positive integers shown in the multiplication table $\sqrt{x} \times \sqrt{x}$. The multiplication table problem (MTP) asks for the asymptotic behavior of $A(x)$. It goes back at least to Erd\H{o}s \cite{Erdos_MT}. Subsequently, Ford \cite{Ford_MT} solved it completely. For an extensive review of the problem, see the introduction of \cite{Ford_MT}.  
    Ford proved that 
    \begin{equation}\label{eq:Ford}
        A(x) \asymp \frac{x}{(\log x)^\delta (\log \log x)^{3/2}}.
    \end{equation}

    To establish \eqref{eq:Ford}, Ford considered a more subtle quantity. For $y\leq z\leq x$ let
    \[
        H(x,y,z) = \{ n\leq x : \exists d\in (y,z] \mbox{ such that } d\mid n\}.
    \]
    Following works of Besicovitch \cite{besicovitch1935density}, Erd\H{o}s \cite{Erdos_MT}, and Tenenbaum  \cite{tenenbaum1980lois,tenenbaum1975repartition}, Ford gave a uniform asymptotic formula \cite[Theorem~1]{Ford_MT} which in particular implies that 
    \begin{equation}\label{eq:Hy2y}
        |H(x,y,2y)| \asymp \frac{x}{(\log x)^\delta (\log \log x)^{3/2}}.
    \end{equation}
    Note that \eqref{eq:Ford} follows from \eqref{eq:Hy2y}, using the inequalities
    \[
        \left|H\left(\frac{x}{4}, \frac{\sqrt{x}}{4}, \frac{\sqrt{x}}{2} \right) \right| \leq |A(x)| \leq \sum_{k \geq 0} \left| H \left(\frac{x}{2^k}, \frac{\sqrt{x}}{2^{k+1}}, \frac{\sqrt{x}}{2^k} \right) \right|.
    \]

    Variants of the MTP were studied extensively in the literature: For example, in \cite{Koukoulopoulos_generalized_MT,Koukoupoulos_localized_factorization}, Koukoulopoulos 
    studies a higher dimensional variant of the MTP, and in \cite{Koukoulopoulos_Divisors_of_shifted_primes} he restricts the set of integers to shifts of primes (e.g., $n\in \{p+2: p \mbox{ prime}\}$). Mehdizadeh \cite{smooth_numbers_MT} studies the  number of $y$-smooth numbers in the MTP. The problem was also considered in other settings: 
    Eberhard, Ford and Green \cite{permutations_MT} studies an analogue of the MTP for permutations, Narayanan,  Sahasrabudhe, and Tomon \cite{bipartite_grphs_MT} for bipartite graphs, and Meisner \cite{Meisner_MT} for polynomials over finite fields. 

    \subsubsection{Function field variants of the MTP}
    Recall that  $\mathbb{F}_q$ is the finite field with $q$  elements, $\FF_q[T]$ the ring of polynomials with coefficients in $\FF_q$, and $\mathcal{M}_n \subset \mathbb{F}_q[T]$ the subset of monic polynomials of degree $n$. Moreover, we denote $\mathcal{M}=\bigcup_{n\in \NN} \mathcal{M}_n$. There is a classical analogy between $\FF_q[T]$ and $\ZZ$. In particular, the analogue of $|a| = | \mathbb{Z}/a\mathbb{Z}|$, for $a\in \mathbb{Z}$ is $|f| = |\FF_q[T]/f\FF_q[T]| = q^{\deg f}$, for $0\neq f \in \FF_q[T]$. In addition, $\mathcal{M}_n$ may be considered as the analogue of the interval $(x,2x]$, where $q^n = |\mathcal{M}_n|$ is analogous to $x\sim |(x,2x]\cap \ZZ|$. 

    This led Meisner \cite{Meisner_MT} to define 
    \begin{align}
        \label{eq:M2n}
        M(2n) &:= \{ F \in \mathcal{M}_{2n} : \exists G\in \mathcal{M}_n \mbox{ such that } G\mid F\}, \\
        \label{eq:Hnb}
        H(n,b) &:= \{ F \in \mathcal{M}_n : \exists G\in \mathcal{M}_b \mbox{ such that } G\mid F\}.
    \end{align}
    Then, Meisner considers $|M(2n)|$ as an analogue of $A(x)$, and $|H(n,b)|$ as an analogue of $|H(x,y,2y)|$. He proves\footnote{
        The proof in  \cite{Meisner_MT}  contains a gap: It uses a uniform-in-$q$ estimate of the number of rough polynomials. However, \emph{loc.\ cit.} omits a proof, and only sketches an argument,  which the authors could not complete. We fill in the gap in Proposition~\ref{prop:Uniform Rough Polynomials} using a different argument based on Selberg's sieve.} that, for $b\leq n/2$,
    \begin{equation}\label{Meisner_Hnb}
        |H(n,b)| \asymp \frac{q^{n}}{b^\delta(1+ \log b)^{3/2}}, \qquad q^n \to \infty.
    \end{equation}
    See \cite[Theorem 1.2]{Meisner_MT}.

    Since $M(2n)=H(2n,n)$, the MTP follows directly from \eqref{Meisner_Hnb}:
    \[
            |M(2n)| \asymp \frac{q^{2n}}{n^\delta(1+ \log n)^{3/2}}, \qquad q^n\to \infty.
    \]

    One natural variant of the MTP is to consider only these elements in the table that fall into a given arithmetic progression.
    For this purpose, for polynomials $M \in \mathbb{F}_q[T]$ and $A \in \mathbb{F}_q[T] / M\mathbb{F}_q[T]$, let us define the set
    \begin{align*}
        H(n,b;A,M) := \left\{F\in \mathcal{M}_n : F \mbox{ has a divisor of degree b}, F \equiv A \mod\ M\right\}.
    \end{align*}
    We expect that $H(n,b;A,M)$ distributes uniformly for residues $A\in (\FF_q[T]/M\FF_q[T])^*$. We show that the expected lower bound holds, at least up to a constant, under some mild restrictions:
    \begin{theorem}\label{thm:MTP on AP}
        Let $0 < \epsilon$, $0 < C$ and $1\leq b=b(n)\leq n/2$. Then
        \[
            |H(n,b;A,M)| \gg_{C,\epsilon} \frac{1}{\Phi(M)}\cdot \frac{q^n}{b^{\delta}(1+\log b)^{\frac32}}, \qquad q^n\to \infty,
        \]
        uniformly over all $M,A\in \FF_q[T]$ and $(A,M)=1$ such that:
        \begin{align}
             &\deg M \leq (1/2 - \epsilon) b , \label{cond.1}\\
              &P_j(M)\leq C \frac{q^j}{j^{1+\epsilon}}, \quad \mbox{for all $j\in\NN$ }. \label{cond.2}
        \end{align}
    \end{theorem}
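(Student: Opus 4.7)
The strategy is to derive Theorem~\ref{thm:MTP on AP} from Theorem~\ref{thm: divisors on APs} by summing the product-set bound over all $\Phi(M)$ factorizations $A \equiv A_1 A_2 \mod M$ with $(A_i,M)=1$. For each such pair, apply Theorem~\ref{thm: divisors on APs} with $M_1 = M_2 = M$ and residues $(A_1, A_2)$; since $[M,M]=M$, its two hypotheses follow at once from \eqref{cond.1} and \eqref{cond.2}. Setting $\Omega_i^{(A_i)} = \{G \in \mathcal{M}_{b_i} : G \equiv A_i \mod M\}$ with $b_1 = b$ and $b_2 = n-b$, every element of $\Omega_1^{(A_1)} \cdot \Omega_2^{(A_2)}$ has the form $F = GH$, hence satisfies $F \equiv A \mod M$ and has $G$ as a degree-$b$ divisor. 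Thus $\Omega_1^{(A_1)} \cdot \Omega_2^{(A_2)} \subseteq H(n,b;A,M)$, and Theorem~\ref{thm: divisors on APs} yields
\[
    |\Omega_1^{(A_1)} \cdot \Omega_2^{(A_2)}| \gg_{C,\epsilon} \frac{q^n}{\Phi(M)^2 \, b^\delta (\log b)^{3/2}}.
\]

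Summing over the $\Phi(M)$ admissible values of $A_1$ (with $A_2 = A A_1^{-1}$) yields
\[
    \sum_{A_1 \in (\FF_q[T]/M\FF_q[T])^*} |\Omega_1^{(A_1)} \cdot \Omega_2^{(A A_1^{-1})}| \gg_{C,\epsilon} \frac{q^n}{\Phi(M) \, b^\delta (\log b)^{3/2}}.
\]
The left-hand side equals $\sum_{F \in H(n,b;A,M)} \nu(F)$, where $\nu(F)$ denotes the number of distinct residues modulo $M$ among the degree-$b$ divisors of $F$. Thus the desired lower bound on $|H(n,b;A,M)|$ reduces to showing that $\nu(F)$ is $O_{C,\epsilon}(1)$ on average over $F \in H(n,b;A,M)$.

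The main obstacle is precisely this control of the overcounting. The trivial estimate $\nu(F) \leq \tau_b(F)$, where $\tau_b(F)$ denotes the number of degree-$b$ divisors of $F$, combined with the second-moment computation $\sum_{F \in H(n,b;A,M)} \tau_b(F)^2 \ll q^n b/\Phi(M)$ (where \eqref{cond.2} handles the coprimality to $M$) and Cauchy--Schwarz, yields only the lower bound $|H(n,b;A,M)| \gg q^n/(\Phi(M) b^{1+2\delta}(\log b)^3)$, which is short of the target by a factor of $b^{1+\delta}(\log b)^{3/2}$. The sharp bound comes from a Ford-type distributional analysis \cite{Ford_MT} of divisors, adapted to the function field setting in the spirit of Meisner \cite{Meisner_MT} and with the arithmetic progression imposed, showing that a typical $F \in H(n,b;A,M)$ has only $O_{C,\epsilon}(1)$ distinct residues among its degree-$b$ divisors. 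Hypothesis \eqref{cond.2} is essential here, ruling out anomalous concentrations of small-degree primes dividing $M$ that would otherwise inflate $\nu(F)$.
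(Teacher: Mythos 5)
Your strategy of summing the two-AP product-set bound over the $\Phi(M)$ factorizations $A\equiv A_1A_2 \mod M$ runs into a genuine overcounting problem, which you correctly identify but do not resolve. The quantity $\sum_{A_1}|\Omega_1^{(A_1)}\cdot\Omega_2^{(AA_1^{-1})}|$ equals $\sum_{F\in H(n,b;A,M)}\nu(F)$ where $\nu(F)$ is the number of distinct residue classes modulo $M$ occupied by degree-$b$ divisors of $F$, and the sets $\Omega_1^{(A_1)}\cdot\Omega_2^{(AA_1^{-1})}$ are \emph{not} pairwise disjoint as $A_1$ varies. (The paper explicitly flags this: the Remark following the proof of Theorem~\ref{thm:divisor on AP} says precisely that the disjointness argument used there ``could not be applied to bound $H(n,b;A,M)$, since the corresponding sets\dots\ are not necessarily disjoint.'') Your proposed fix --- that a Ford-type distributional analysis would show $\nu(F)=O_{C,\epsilon}(1)$ on average over $H(n,b;A,M)$ --- is asserted without proof, and there is no reason to believe it: the degree-$b$ divisors of a typical element of $H(n,b;A,M)$ can spread out over many residue classes, and one cannot do better than the trivial $\nu(F)\leq\Phi(M)$ without real work, which loses an entire factor of $\Phi(M)$. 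Your own Cauchy--Schwarz computation, which loses $b^{1+\delta}(\log b)^{3/2}$, confirms the gap.

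The paper's route is different and avoids the issue entirely. It does \emph{not} derive Theorem~\ref{thm:MTP on AP} from Theorem~\ref{thm: divisors on APs}; instead both are obtained from Lemma~\ref{lm:main_lower_bound}, which proves two separate lower bounds --- one with $\Phi(M_1)\Phi(M_2)$ for the two-sided AP case, and one with only $\Phi(M_2)$ for the single-AP case. The single-AP clause is proved by the same $F=HPB$ construction with the congruence condition on $PG_1\pmod{M_1}$ simply dropped; crucially, that construction assigns to each counted $F$ a \emph{unique} triple $(H,P,B)$ (because the degree ranges of the factors are disjoint), so each $F$ is counted exactly once and there is no overcounting to undo. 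Feeding the resulting sum of $L(H)/|H|$ into Proposition~\ref{lower_bound} gives the theorem directly, with a short additional argument for bounded $b$. If you want to deduce Theorem~\ref{thm:MTP on AP} without re-opening the proof, you would need a genuinely new idea for controlling $\nu(F)$; the approach as written does not close.
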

    The proof does not follow directly from Theorem~\ref{thm: divisors on APs}, but rather from the methods developed to prove it. 
    Another natural variant is to consider the divisors to be taken from a given arithmetic progression. Define
    \[
         H'(n,b;A,M) := \left\{F\in \mathcal{M}_n : \exists G\in \mathcal{M}_b, G|F \mbox{ and }G \equiv A \mod\ M\right\}.
    \]
    We obtain lower bounds under similar assumptions.
    \begin{theorem}\label{thm:divisor on AP}
        Let $0 < \epsilon$, $0 < C$ and $1\leq b=b(n)\leq n/2$. Then 
        \[
            |H'(n,b;A,M)| \gg_{C,\epsilon} \frac{1}{\Phi(M)}\cdot \frac{q^n}{b^{\delta}(1+\log b)^{\frac32}}, \qquad q^n\to \infty,
        \]
        uniformly over all $M,A\in \FF_q[T]$ and $(A,M)=1$ satisfying \eqref{cond.1}, \eqref{cond.2}.
    \end{theorem}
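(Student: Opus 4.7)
The plan is to deduce Theorem~\ref{thm:divisor on AP} as a corollary of Theorem~\ref{thm: divisors on APs} by realizing $H'(n,b;A,M)$ as a superset of a product of two arithmetic progressions, one of which is the trivial progression $\mathcal{M}_{n-b}$ itself.

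First I would unpack the definition of $H'$: an element $F\in H'(n,b;A,M)$ is precisely a polynomial of the form $F = G H$ with $G\in \mathcal{M}_b$ satisfying $G\equiv A\pmod M$, and with $H = F/G \in \mathcal{M}_{n-b}$. Conversely, every product $G H$ with $G\in \mathcal{M}_b$, $G\equiv A\pmod M$, and $H\in \mathcal{M}_{n-b}$ arbitrary lies in $H'(n,b;A,M)$. Setting
\[
    \Omega_1 = \{G\in \mathcal{M}_b : G\equiv A\pmod M\}, \qquad \Omega_2 = \mathcal{M}_{n-b},
\]
one therefore has $\Omega_1\cdot \Omega_2 \subseteq H'(n,b;A,M)$, so it suffices to lower-bound $|\Omega_1\cdot \Omega_2|$.

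Next I would apply Theorem~\ref{thm: divisors on APs} with $M_1=M$, $A_1=A$, $M_2=1$, $A_2=1$, $b_1=b$, $b_2=n-b$, observing that $\Omega_2$ is literally the set $\{H\in \mathcal{M}_{n-b}: H\equiv 1\pmod 1\}$. Then $[M_1,M_2]=M$, so Condition~\ref{eq:consition_Pj} of that theorem is exactly \eqref{cond.2}, while Condition~1 reduces to $\deg M\leq(1/2-\epsilon)b$ together with the trivial $\deg M_2=0\leq(1/2-\epsilon)b$, which is \eqref{cond.1}. The coprimality $(A_i,M)=1$ holds by assumption for $i=1$ and trivially for $i=2$. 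Since $\Phi(1)=1$, Theorem~\ref{thm: divisors on APs} yields
\[
    |H'(n,b;A,M)| \geq |\Omega_1\cdot \Omega_2| \gg_{C,\epsilon} \frac{q^n}{\Phi(M)\, b^{\delta}(\log b)^{3/2}}.
\]

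Finally, to reconcile the denominator $(\log b)^{3/2}$ with the stated $(1+\log b)^{3/2}$, I would note that for $b\geq 2$ one has $(\log b)^{3/2}\leq(1+\log b)^{3/2}$, so the bound above directly implies the desired one. For $b=1$, condition~\eqref{cond.1} forces $M=1$, so $H'(n,1;1,1)$ is simply the set of $F\in \mathcal{M}_n$ possessing a root in $\FF_q$; inclusion--exclusion over single roots immediately gives $\gg q^n$ such $F$, matching the claim. There is no genuine obstacle here: the statement is essentially a specialization of Theorem~\ref{thm: divisors on APs} to a trivial modulus on one side, and the only point deserving attention is verifying that the hypotheses continue to hold in this degenerate setting.
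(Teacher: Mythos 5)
Your proof is correct, and it takes a genuinely different route from the paper's derivation. The paper's argument partitions $\mathcal{M}_{n-b}$ according to residue class modulo $M$: it writes $H'(n,b;A,M) \supseteq \bigsqcup_{A'} H(n,b;A,A',M,M)$ over invertible $A'$, applies Theorem~\ref{thm: divisors on APs} with $M_1=M_2=M$ to each of the $\Phi(M)$ disjoint pieces (each contributing $\gg q^n/\Phi(M)^2 b^\delta(1+\log b)^{3/2}$), and sums. You instead observe that $H'(n,b;A,M)$ is \emph{exactly} the product set $\Omega_1\cdot\mathcal{M}_{n-b}$, and invoke Theorem~\ref{thm: divisors on APs} a single time with the degenerate modulus $M_2=1$, so that $\Phi(M_2)=1$ yields the desired bound directly. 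This is cleaner: it exploits that the theorem's hypotheses and its proof (via Lemma~\ref{lm:main_lower_bound}) place no lower bound on $\deg M_2$, and it removes the need to verify disjointness of a union. The checks you carry out --- that $[M,1]=M$ so condition~\ref{eq:consition_Pj} on $M$ is unchanged, that $\deg M_2 =0 \le (1/2-\epsilon)b$ trivially, and that $(1,M)=1$ --- are exactly the points that need attention and are all correct. Your cosmetic remark about $(\log b)^{3/2}$ versus $(1+\log b)^{3/2}$ and your separate handling of $b=1$ are harmless (the paper's proof of Theorem~\ref{thm: divisors on APs} itself treats bounded $b$ separately, so invoking it already covers that range), and your inclusion--exclusion estimate for $b=1$, $M=1$ is sound if made quantitative via the Bonferroni bound $|\{F\in\mathcal{M}_n:\exists a\in\FF_q,\,F(a)=0\}|\geq q\cdot q^{n-1}-\binom{q}{2}q^{n-2}>q^n/2$.
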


    Theorem \ref{thm:divisor on AP} follows from Theorem~\ref{thm: divisors on APs}, see Section~\ref{sec:last} for a formal derivation.

     These theorems may be applied to resolve  the MTP restricted to arithmetic progressions. We derive the following two generalizations of multiplication tables: The first concerns 
    \[
        M(2n;A,M) := H(2n,n;A,M).
    \]
    That is, we keep only the entries $\equiv A\mod M$ in the  original table. The second concerns 
    \[
        M'(2n;A,M) := H'(2n,n; A,M),
    \]
    i.e. we take one of the sides consisting of elements $\equiv A\mod M$. Theorems~\ref{thm:MTP on AP} and \ref{thm:divisor on AP} immediately imply the following result.
    
    \begin{corollary}
        Let $0 < \epsilon$, $0 < C$ and $1\leq b=b(n)\leq n/2$. Then 
        \[
            |M(2n;A,M)| \gg_{C,\epsilon} \frac{1}{\Phi(M)}\cdot \frac{q^{2n}}{n^{\delta}(1+\log n)^{\frac32}}, \qquad q^n\to \infty,
        \]
        \[
            |M'(2n;A,M)| \gg_{C,\epsilon} \frac{1}{\Phi(M)}\cdot \frac{q^{2n}}{n^{\delta}(1+\log n)^{\frac32}}, \qquad q^n\to \infty,
        \]
        uniformly over all $M,A\in \FF_q[T]$ and $(A,M)=1$ satisfying \eqref{cond.1}, \eqref{cond.2}.
    \end{corollary}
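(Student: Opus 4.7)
The corollary is an immediate specialization of Theorems~\ref{thm:MTP on AP} and~\ref{thm:divisor on AP}. My plan is straightforward: by the definitions $M(2n;A,M) = H(2n,n;A,M)$ and $M'(2n;A,M) = H'(2n,n;A,M)$, it suffices to invoke Theorem~\ref{thm:MTP on AP} for the first bound and Theorem~\ref{thm:divisor on AP} for the second, in each case with the pair $(n,b)$ appearing in those theorems replaced by $(2n,n)$.

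Verifying the hypotheses is routine. The internal length constraint $1 \le b \le n/2$ of either theorem becomes $1 \le n \le (2n)/2 = n$, which is automatic. Condition~\eqref{cond.2} carries over verbatim since it is a condition on $M$ alone. For condition~\eqref{cond.1}, the corollary's hypothesis reads $\deg M \le (1/2-\epsilon) b$ for some $b$ with $b \le n/2$; since the required inequality for the theorems applied with internal length parameter $n$ is $\deg M \le (1/2-\epsilon) n$, and $b \le n/2 \le n$, the hypothesis passes through with room to spare. Substituting these choices into Theorem~\ref{thm:MTP on AP} yields
\[
    |M(2n;A,M)| = |H(2n,n;A,M)| \gg_{C,\epsilon} \frac{1}{\Phi(M)} \cdot \frac{q^{2n}}{n^{\delta}(1+\log n)^{3/2}},
\]
and the analogous substitution in Theorem~\ref{thm:divisor on AP} yields the bound for $|M'(2n;A,M)|$. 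The asymptotic regime $q^n \to \infty$ is equivalent to $q^{2n} \to \infty$, so the quantifier is also preserved.

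Because the corollary is a direct specialization, there is essentially no obstacle: the substantive work is contained in Theorems~\ref{thm:MTP on AP} and~\ref{thm:divisor on AP} (which in turn rest on Theorem~\ref{thm: divisors on APs} and, in the case of Theorem~\ref{thm:divisor on AP}, on the derivation given in Section~\ref{sec:last}). The only minor subtlety worth flagging is the bookkeeping around the parameter $b$, which in the corollary plays the role of an auxiliary parameter restricting the allowed moduli $M$, whereas in Theorems~\ref{thm:MTP on AP} and~\ref{thm:divisor on AP} it serves as the internal length parameter; the inequality $b \le n/2$ is precisely what reconciles the two roles.
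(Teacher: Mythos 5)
Your proof is correct and matches the paper's approach: the paper states the corollary "immediately" follows from Theorems~\ref{thm:MTP on AP} and~\ref{thm:divisor on AP} via $M(2n;A,M)=H(2n,n;A,M)$ and $M'(2n;A,M)=H'(2n,n;A,M)$, and your write-up simply spells out the parameter substitution $(n,b)\mapsto(2n,n)$ and the verification that the corollary's hypotheses (with $b\le n/2\le n$) are at least as strong as what the theorems demand with internal length parameter $n$.
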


    The last variant we consider is where both sides consist of arithmetic progressions. Let
    \[
    \begin{split}
        H(n,b;A_1,A_2,M_1,M_2) &:=\{ G_1G_2: G_i\equiv A_i\mod M_i,\ G_1\in \mathcal{M}_b, G_2 \in \mathcal{M}_{n-b}\}, \\
        M(2n;A_1,A_2,M_1,M_2) &:= H(2n,n;A_1,A_2,M_1,M_2).
    \end{split}
    \]

    \begin{figure}[H]
    \begin{tabular}{c|cc}
      $\times$  & $T^3+T+1$    & $T^3+T^2+T+1$      \\ \hline
      $T^3+1$  &  $T^6 + T^4 + T + 1$ &  $T^6 + T^5 + T^4 + T^2 + T + 1$ \\
      $T^3+T+1$  & $T^6 + T^2 + 1$ &  {\color{RoyalBlue}$T^6 + T^5  + T^3  + 1$}  \\
      $T^3+T^2+1$  & $T^6 + T^5 + T^4 + T^3 + T^2 + T + 1$ &  $T^6 + T^3 + T + 1$   \\
      $T^3+T^2+T+1$ & {\color{RoyalBlue} $T^6 + T^5 + T^3 + 1$} & $T^6 + T^4  + T^2  + 1$ 
    
    \end{tabular}
    \caption{Repetitions in the Multiplication Table for $M(6;1,T+1,T,T^2)$ over $\FF_2[T]$.}
    \end{figure}
    Then Theorem~\ref{thm: divisors on APs} implies:
    \begin{theorem}
        Let $0 < \epsilon$, $0 < C$ and $1\leq b=b(n)\leq n/2$. Let $M_1,M_2 \in \FF_q[T]$, and denote $M=[M_1,M_2]$. Then
        \[
            |M(2n;A_1,A_2,M_1,M_2)| \gg_{C,\epsilon} \frac{1}{\Phi(M_1) \cdot \Phi(M_2)}\cdot \frac{q^{2n}}{n^{\delta}(1+\log n)^{\frac32}}, \qquad q^n\to \infty,
        \]
        uniformly over all $M_i,A_i\in \FF_q[T]$ and $(A_i,M)=1,\ i=1,2$ and such that both $M_1, M_2$ satisfy \eqref{cond.1}, \eqref{cond.2}.
    \end{theorem}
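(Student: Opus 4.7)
The plan is to identify the set $M(2n;A_1,A_2,M_1,M_2)$ directly with a product of arithmetic progressions and to invoke Theorem~\ref{thm: divisors on APs}. Unpacking the definitions gives
\[
    M(2n;A_1,A_2,M_1,M_2) = \{ G_1 G_2 : G_i \in \mathcal{M}_n,\ G_i \equiv A_i \mod M_i\} = \Omega_1\cdot \Omega_2,
\]
where $\Omega_i = \{G\in \mathcal{M}_n : G\equiv A_i \mod M_i\}$. I would then apply Theorem~\ref{thm: divisors on APs} after replacing its parameters $n$ and $b$ by $2n$ and $n$ respectively, so that $b_1=b_2=n$; this is admissible since $1\leq n\leq (2n)/2$.

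What remains is to verify the two hypotheses of Theorem~\ref{thm: divisors on APs} for $M := [M_1,M_2]$. Hypothesis~1 requires $\deg M_i \leq (1/2-\epsilon)\,n$, which follows at once from \eqref{cond.1} applied to each $M_i$ (and $b\leq n/2$, so a fortiori $\deg M_i\leq (1/2-\epsilon)\,n$). For hypothesis~\ref{eq:consition_Pj}, the key observation is that every monic irreducible dividing $[M_1,M_2]$ must divide $M_1$ or $M_2$, hence
\[
    P_j([M_1,M_2]) \leq P_j(M_1)+P_j(M_2) \leq 2C\,\frac{q^j}{j^{1+\epsilon}},
\]
so condition \ref{eq:consition_Pj} holds for $M$ with $2C$ in place of $C$. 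Note also that the coprimality assumption $(A_i,M)=1$ on the lcm $M=[M_1,M_2]$ is precisely what Theorem~\ref{thm: divisors on APs} demands.

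Applying Theorem~\ref{thm: divisors on APs} with this data then yields
\[
    |M(2n;A_1,A_2,M_1,M_2)| = |\Omega_1\cdot\Omega_2|\gg_{C,\epsilon} \frac{q^{2n}}{\Phi(M_1)\,\Phi(M_2)}\cdot \frac{1}{n^\delta(\log n)^{3/2}}
\]
as $q^n\to\infty$; the replacement of $(\log n)^{3/2}$ by $(1+\log n)^{3/2}$ only strengthens the bound for bounded $n$ and is absorbed into the implied constant. There is no serious obstacle here: the entire argument is a direct reduction to Theorem~\ref{thm: divisors on APs}, and the only non-trivial (though elementary) point is the subadditivity $P_j([M_1,M_2])\leq P_j(M_1)+P_j(M_2)$, which transfers the rough-polynomial hypothesis from the individual moduli to their least common multiple.
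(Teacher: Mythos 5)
Your proof is correct and takes essentially the same route as the paper, which simply states the result as a direct consequence of Theorem~\ref{thm: divisors on APs} without writing out the reduction. You correctly identify $M(2n;A_1,A_2,M_1,M_2)$ as a product of the two progressions $\Omega_i\subset\mathcal{M}_n$, substitute $(2n,n)$ for the parameters $(n,b)$ of Theorem~\ref{thm: divisors on APs}, check that $\deg M_i\leq(1/2-\epsilon)b\leq(1/2-\epsilon)n$ verifies hypothesis~1, and use the elementary subadditivity $P_j([M_1,M_2])\leq P_j(M_1)+P_j(M_2)$ to verify hypothesis~\ref{eq:consition_Pj} for the lcm with constant $2C$; since the range $1\leq b\leq n/2$ forces $n\geq 2$, the trivial inequality $(\log n)^{3/2}\leq(1+\log n)^{3/2}$ closes the argument.
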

    We apply \cite{Meisner_MT} to get sharp upper bounds only when the respective moduli are bounded and only as $n \to \infty$ and $q$ is fixed. Let us state the result for this case.
    \begin{corollary}\label{thm:M is bounded}
        Let $1\leq b=b(n)\leq n/2$, and fix $M_i,A_i\in \FF_q[T],\ i=1,2$. Assume that both $M_1,M_2$ satisfy \eqref{cond.1}. 
         Let $M:=[M_1,M_2]$ and assume $(A_i,M)=1$. Then,
        \begin{align*}
            |H(n,b;A_1,M_1)| \asymp |H'(n,b;A_1,M_1)| \asymp 
            |H(n,b;A_1,A_2,M_1,M_2)| \asymp \frac{q^{n}}{b^{\delta}(1+\log b)^{\frac32}}, \\
            |M(2n;A_1,M_1)| \asymp |M'(2n;A_1,M_1)| \asymp 
            |M(2n;A_1,A_2,M_1,M_2)| \asymp \frac{q^{2n}}{n^{\delta}(1+\log n)^{\frac32}},
        \end{align*}
        as $n \to \infty$.
    \end{corollary}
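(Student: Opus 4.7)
The plan is to combine the lower bounds already provided by the earlier theorems in this section with a matching upper bound obtained from Meisner's estimate \eqref{Meisner_Hnb}.

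\emph{Lower bounds.} The key point is that once $q$, $M_1$, $M_2$ are fixed, condition \eqref{cond.2} for $M=[M_1,M_2]$ is automatic: each $P_j(M)$ is at most $\deg(M)/j$ while $q^j/j^{1+\epsilon}$ grows without bound in $j$, so one may choose $C$ depending only on the fixed data $(q,M,\epsilon)$ so that \eqref{cond.2} holds. Condition \eqref{cond.1} holds for each $M_i$ by hypothesis, and the factors $\Phi(M_i)$ appearing in the earlier theorems become fixed constants that can be absorbed into the implied constant. Hence the three lower bounds of the form $\gg q^{n}/(b^{\delta}(1+\log b)^{3/2})$ follow from Theorem~\ref{thm:MTP on AP} (for $|H(n,b;A_1,M_1)|$), Theorem~\ref{thm:divisor on AP} (for $|H'(n,b;A_1,M_1)|$) and Theorem~\ref{thm: divisors on APs} (for $|H(n,b;A_1,A_2,M_1,M_2)|$, upon identifying this set with the product $\Omega_1\cdot\Omega_2$ in the notation of that theorem). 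Specializing to $b=n$ yields the lower bounds for the three $M(2n;\ldots)$ quantities via the definitional identities $M(2n;\ldots)=H(2n,n;\ldots)$.

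\emph{Upper bounds.} I would invoke the trivial set-theoretic inclusions
\[
H(n,b;A_1,M_1),\ H'(n,b;A_1,M_1),\ H(n,b;A_1,A_2,M_1,M_2)\ \subseteq\ H(n,b),
\]
each immediate from the definitions — for the last, any product $G_1G_2$ with $G_1\in\mathcal{M}_b$, $G_2\in\mathcal{M}_{n-b}$ automatically lies in $\mathcal{M}_n$ and possesses the divisor $G_1$ of degree $b$. Meisner's formula \eqref{Meisner_Hnb} then yields the matching upper bound $\ll q^{n}/(b^{\delta}(1+\log b)^{3/2})$, and the case $b=n$ handles the $M(2n;\ldots)$ assertions.

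\emph{Main difficulty.} There is no substantive obstacle: the corollary is bookkeeping on top of previously established results. The only items worth verifying carefully are the automatic validity of \eqref{cond.2} once $q$ and $M$ are held fixed, and the absorption of $\Phi(M_i)^{-1}$ into the implied constant; both are immediate.
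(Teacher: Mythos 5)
Your proposal is correct and matches the paper's proof: lower bounds come from the preceding theorems (with $\Phi(M_i)$ absorbed into the implied constant once $q, M_1, M_2$ are fixed, and condition \eqref{cond.2} holding automatically for a suitable $C$), and upper bounds come from the trivial inclusion of each set in $H(n,b)$ together with Meisner's estimate \eqref{Meisner_Hnb}. Your explicit verification that \eqref{cond.2} is automatic for fixed $M$ is a helpful detail the paper leaves implicit.
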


\subsection*{Outline of The Paper}
    Section \ref{section:rough polynomials} proves uniform estimates for the number of rough polynomials. Sections~\ref{sec:arithmetic_lb} and \ref{sec:avg_deg_div} contain the core of the proof of Theorems~\ref{thm: divisors on APs} and \ref{thm:MTP on AP}. In section \ref{sec:last} we complete the proof of Theorems \ref{thm: divisors on APs} and \ref{thm:MTP on AP}, and we deduce Theorems \ref{thm:divisor on AP} and \ref{thm:M is bounded}.
\section*{Acknowledgments}
    The authors are grateful to Ofir Gorodetsky for conversations about rough polynomials and their connection to permutations and for guidance on the relevant literature, and to Dimitris Koukoulopoulos for proposing the use of sieve theory in the proof of Theorem~\ref{prop:Uniform Rough Polynomials} and directing them to the paper by Xu and Zhou.

    The authors were partially supported by a grant of the Israel Science Foundation no.\ 702/19.

\section{On The Number of Rough Polynomials}\label{section:rough polynomials}
The goal of this section is to state results on $b$-rough polynomials that are needed for the main theorems, and to provide elementary and self-contained proofs. We also explain how these results may be derived from the literature.

Let $\mathbb{F}_q$ be a finite field with $q$ elements. Throughout this paper, we use the terminology \textbf{prime} to indicate a monic irreducible polynomial in $\mathbb{F}_q[T]$ -- we denote primes by the letter $P$. For $F\in \FF_q[T]$, we denote by $P^-(F)$ the smallest degree of a prime $P|F$.
For $b\in \mathbb{R}$, we say that a polynomial $F \in \FF_q[T]$ is $b$-rough if $P^-(F) > b$. For $n\in \NN$, we denote the number of $b$-rough monic polynomials of degree $n$ by
\[
    \Psi(n,b) := \# \{F\in \mathcal{M}_n: P^- (F) > b\}.
\]
For $A,M \in \FF_q[T]$, we define
\[
    \Psi(n,b; A,M) := \# \{F\in \mathcal{M}_n: P^- (F) > b,\ F \equiv A \mod{M}\}.
\]

 In this section, we determine the order of magnitude of $\Psi(n,b)$ and $\Psi(n,b;A,M)$ in a wide range of uniformity. We begin with $\Psi(n,b)$, and then show that $b$-rough polynomials are equidistributed -- at least up to a constant -- in arithmetic progressions with $(A,M)=1$, and hence deducing the order of magnitude of $\Psi(n,b;A,M)$.
 We introduce two more pieces of notation.

 For $n\in \NN$ and co-prime polynomials $A,M \in \FF_q[T]$, let
 \begin{align}\label{df:pi}
     \pi_q(n) &= \# \{P\in \mathcal{M}_n: P \textnormal{ is prime} \}, \\
     \pi_q(n;A,M) &= \# \{ P \in \mathcal{M}_n: P \textnormal{ is prime and } P \equiv A \mod M \}.
 \end{align}

We shall repeatedly use the following well-known strong forms of the prime polynomial theorems:

\begin{theorem} \label{th:PPTs}
    Let $M\in \mathbb{F}_q[T]$, $A \in \left(\mathbb{F}_q[T] / M \mathbb{F}_q[T]\right)^{\times}$ and let $n$  be a positive integer. Then,  
    \begin{align}
        \label{PPT}\frac{q^n}{n} - \frac{2q^{n/2}}{n} &\leq \pi_q(n) \leq \frac{q^n}{n}, \\
        \label{PPTAP}
        \pi_q(n;A,M)  &= \frac{1}{\Phi(M)}\frac{q^n}{n} + O\ps{\frac{q^{\frac{n}{2}}}{n}\deg M}, \qquad q^n \to \infty
    \end{align}
\end{theorem}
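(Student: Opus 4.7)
The plan is to prove both estimates by standard analytic techniques in the function field setting; full details may be read off from Rosen's \emph{Number Theory in Function Fields}, Chapters 2 and 4, and I would ultimately cite that source rather than reprove everything from scratch.

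For \eqref{PPT}, I would begin with the classical counting identity
\[
    q^n = \sum_{d \mid n} d\, \pi_q(d),
\]
which expresses the factorization of $T^{q^n}-T$ into distinct monic irreducibles, equivalently the fact that $\FF_{q^n}$ is a degree-$n$ extension of $\FF_q$. Möbius inversion then gives
\[
    n\pi_q(n) = \sum_{d \mid n} \mu(n/d)\, q^d = q^n + \sum_{\substack{d \mid n \\ d \leq n/2}} \mu(n/d)\, q^d.
\]
The error sum is bounded by the geometric series $\sum_{d \leq n/2} q^d \leq 2q^{n/2}$, which yields the lower bound in \eqref{PPT}; the upper bound $\pi_q(n) \leq q^n/n$ is immediate from non-negativity of the terms $d\,\pi_q(d)$ in the original identity.

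For \eqref{PPTAP}, I would follow the function-field analogue of Dirichlet's argument. Letting $\chi$ range over the characters of $(\FF_q[T]/M\FF_q[T])^\times$, orthogonality yields
\[
    \pi_q(n;A,M) = \frac{1}{\Phi(M)} \sum_{\chi} \overline{\chi(A)} \sum_{\substack{P \in \mathcal{M}_n \\ P \text{ prime}}} \chi(P).
\]
The principal character contributes $\Phi(M)^{-1}\bigl(\pi_q(n) + O(\deg M / n)\bigr)$, where the error accounts for the at most $\deg M / n$ primes of degree $n$ dividing $M$; combined with \eqref{PPT}, this produces the main term $\Phi(M)^{-1} q^n/n$ up to an error absorbed into the claim.

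The crucial step is bounding the contribution of each non-principal character $\chi$. For this I would invoke the $L$-function $L(u,\chi) = \prod_{P \nmid M}(1-\chi(P)u^{\deg P})^{-1}$, which by Artin is a polynomial in $u$ of degree at most $\deg M - 1$, and whose reciprocal roots all have absolute value $q^{1/2}$ by Weil's theorem (the Riemann hypothesis for curves over $\FF_q$). Extracting the coefficient of $u^n$ from the logarithmic derivative $-u L'(u,\chi)/L(u,\chi)$ and discarding the prime-power contributions from degrees $d\mid n$, $d<n$ (which contribute at most $2q^{n/2}$) gives
\[
    \Bigl|\sum_{\substack{P \in \mathcal{M}_n \\ P \text{ prime}}} \chi(P)\Bigr| \ll \deg M \cdot \frac{q^{n/2}}{n}.
\]
Summing over the $\Phi(M)-1$ non-principal characters and dividing by $\Phi(M)$ produces the claimed error $O(q^{n/2}\deg M/n)$. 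The only real obstacle is appealing to the Riemann hypothesis for function fields, which is deep but entirely off-the-shelf; since nothing new is being proved here, I would simply quote it rather than reprove it.
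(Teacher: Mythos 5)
Your argument is correct and is precisely the standard proof that the paper itself cites rather than reproves: the paper refers to the M\"obius inversion of $q^n = \sum_{d\mid n} d\,\pi_q(d)$ for \eqref{PPT} (via Pollack) and to the Riemann hypothesis for function fields for \eqref{PPTAP} (via Rosen, Theorem 4.8), which is exactly your character-sum/$L$-function computation. One small imprecision worth flagging: not every reciprocal root of $L(u,\chi)$ has absolute value exactly $q^{1/2}$ --- for a nonprimitive or ``even'' $\chi$ there can be trivial zeros contributing reciprocal roots of absolute value $1$ --- but the inequality $|\alpha_i|\le q^{1/2}$ is all the estimate needs, so the bound goes through unchanged.
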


The first assertion follows from the exact formula $\pi_q(n) = \frac{1}{n} \sum_{d\mid n} \mu(n/d)q^d$, see \cite[Lemma~4]{pollack2013irreducible} and the second from the Riemann hypothesis for function fields, see \cite[Theorem 4.8]{NumberTheory_FF}.
\\

Warlimont \cite[Eq.\ 4]{Worlimont} estimated the number of $b$-rough polynomials for a fixed finite field. Using the comparison between polynomials and permutations \cite[Theorem 5.8]{ABT}, for large $q$, one may estimate $b$-rough polynomials by $b$-rough permutations, which is well known, see e.g.\ \cite{Ford_permutations}.

We give here an elementary direct proof, using Selberg's sieve.

\begin{theorem}\label{prop:Uniform Rough Polynomials}
    Let $1\leq b=b(n) < n$. Then,
    \[
    \Psi(n,b) \asymp \frac{q^n}{b}, \qquad q^n \to \infty.
    \]    

\end{theorem}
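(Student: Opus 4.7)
The plan is to prove the matching upper and lower bounds on $\Psi(n,b)$ separately. Both rest on the function-field Mertens estimate
\[
\sum_{\deg P \leq b}\frac{1}{|P|} = \log b + O(1), \qquad \prod_{\deg P \leq b}\Bigl(1-\frac{1}{|P|}\Bigr) \asymp \frac{1}{b},
\]
which follows from the prime polynomial theorem \eqref{PPT} by summing $\pi_q(j)/q^j = 1/j + O(q^{-j/2}/j)$ over $1 \leq j \leq b$ and exponentiating.

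For both bounds, the easy regime is $b < n \leq 2b+1$: any composite polynomial of degree $n$ here has a prime factor of degree $\leq \lfloor n/2\rfloor \leq b$, so $\Psi(n,b) = \pi_q(n)$, and both directions of the theorem follow from \eqref{PPT} together with $n \asymp b$.

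For $n > 2b+1$, I handle the upper bound via Selberg's sieve applied to $\mathcal{M}_n$ with sifting set $\mathcal{P}_b := \{P \text{ prime}: \deg P \leq b\}$ and sifting level $z = \lfloor n/2\rfloor$. The key advantage of the polynomial setting is that for squarefree $d_1,d_2$ supported on $\mathcal{P}_b$ with $\deg d_i \leq z$ one has $\deg[d_1,d_2]\leq 2z\leq n$, so the congruence count $|\{F \in \mathcal{M}_n : [d_1,d_2]\mid F\}|$ equals $q^{n-\deg[d_1,d_2]}$ \emph{exactly}: the sieve has no remainder term. Selberg's optimization then yields $\Psi(n,b)\leq q^n/G$ where
\[
G = \sum_{\substack{d\text{ sqf monic}\\ P\mid d \Rightarrow \deg P\leq b\\ \deg d\leq z}}\prod_{P\mid d}\frac{1}{|P|-1},
\]
and a Rankin-type truncation against the full Euler product $\prod_{\deg P\leq b}|P|/(|P|-1) \asymp b$ gives $G \gg b$, concluding $\Psi(n,b)\ll q^n/b$.

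For the lower bound with $n > 2b+1$, my plan is a Buchstab-type iteration. Each $b$-rough $F\in \mathcal{M}_n$ either has all prime factors of degree $\leq n/2$, or carries a unique prime factor $P$ of degree $> n/2$ whose removal leaves $F = PG$ with $G\in\mathcal{M}_{n-\deg P}$ again $b$-rough, yielding
\[
\Psi(n,b) \geq \pi_q(n) + \sum_{b < j < n/2}\pi_q(n-j)\,\Psi(j,b),
\]
which, combined with the base case $\Psi(j,b) = \pi_q(j) \asymp q^j/b$ for $b < j \leq 2b+1$, is to be bootstrapped by induction on $n$. The main obstacle is closing this induction with an absolute constant uniform in $q$, $n$, and $b$ when $b$ is much smaller than $n$: the single-step Buchstab sum $\sum_{b<j<n/2}1/(n-j)$ is only of size $\log 2 + O(b/n)$, so the induction must be iterated or supplemented by a Brun-type combinatorial sieve. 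A cleaner fallback is the fundamental lemma of the combinatorial sieve in the $\FF_q[T]$ setting, which with sifting parameter $s = n/(2b)$ delivers $\Psi(n,b) \asymp q^n\prod_{\deg P\leq b}(1 - 1/|P|) \asymp q^n/b$ directly.
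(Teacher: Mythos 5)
Your upper bound argument is essentially the paper's own: Selberg's sieve, exploiting the fact that in $\FF_q[T]$ the remainder $|\{F\in\mathcal M_n:D\mid F\}|-q^n/|D|$ vanishes exactly for $\deg D\le n$. Whether one takes sifting level $b$ (as the paper does) or $\lfloor n/2\rfloor$ makes no essential difference, and your Rankin-truncation estimate for $G$ can be replaced by the paper's direct computation $S(b)\ge b(1-q^{-1})$ using the exact count of monic squarefree polynomials.

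The lower bound, however, contains a genuine gap that you partly acknowledge. Your Buchstab identity $\Psi(n,b)\ge\pi_q(n)+\sum_{b<j<n/2}\pi_q(n-j)\Psi(j,b)$ captures only the unique prime factor of degree $>n/2$, and its density $\sum_{b<j<n/2}1/(n-j)\approx\log 2<1$ is too small for the induction to close with a constant uniform in $q$, $n$, $b$ when $b/n\to 0$. The paper avoids this with a Chebyshev-weighted recursion: from $n=\sum_{P^k\mid F}\deg P$ and swapping sums one obtains
\[
n\,\Psi(n,b)\ \ge\ q^n-2q^{n/2}+\sum_{b<\deg P\le n-b-1}\deg P\cdot\Psi(n-\deg P,b),
\]
where the sum runs over \emph{all} primes of degree in $(b,n-b-1]$, not just those of degree exceeding $n/2$. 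The effective density is now $(n-2b-1)/n\to 1$, and the induction closes cleanly with an explicit uniform constant, $\Psi(n,b)\ge q^n/(10b+5)$. Your fallback to the fundamental lemma of the combinatorial sieve is not a substitute for this: a lower-bound sieve function vanishes below a critical sifting threshold, so with $s=n/(2b)$ your bound degenerates when $n$ is close to $2b$ (that regime must be handled separately via $\Psi(n,b)\ge\pi_q(n)\gg q^n/n\asymp q^n/b$), and more importantly you would need to actually state and prove --- or cite --- a uniform-in-$q$ fundamental lemma over $\FF_q[T]$, which is precisely the kind of uniformity claim the paper is at pains to establish self-containedly, since such an unproved assertion was the gap in Meisner's original argument that this proposition is meant to repair.
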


First, if $b > n/2$, then a polynomial is $b$-rough if and only if it is a prime, so $\Psi(n,b) = \pi_q(n)$ and we are done by \eqref{PPT}.
From now on assume $b\leq n/2$. 

The upper bound is given in a more general setting in \cite[Lemma~3.5]{GorodetskyKovaleva} using an elementary approach based on generating functions.

\begin{lemma}\label{lm:Sieve upper bound}
    Let $1\leq z=z(n) \leq n/2$ and q a prime power. Then,
    \[
    \Psi(n,z) \leq  \frac{q^n}{z(1-\frac{1}{q})}.
    \]    
\end{lemma}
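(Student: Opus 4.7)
The plan is to invoke Selberg's upper-bound sieve on the set $\mathcal{M}_n$ with the sifting set consisting of all primes of degree at most $z$. Writing $P_z := \prod_{\deg P \leq z} P$, we have $\Psi(n,z) = \#\{F \in \mathcal{M}_n : (F, P_z) = 1\}$. Take real weights $(\lambda_d)$ supported on squarefree divisors $d \mid P_z$ with $\deg d \leq z$ and normalized by $\lambda_1 = 1$; Selberg's indicator inequality then gives
\[
    \Psi(n,z) \leq \sum_{F \in \mathcal{M}_n}\Bigl(\sum_{d \mid (F,P_z)} \lambda_d\Bigr)^2 = \sum_{d_1,d_2} \lambda_{d_1}\lambda_{d_2}\,\#\{F \in \mathcal{M}_n : [d_1,d_2] \mid F\}.
\]
The polynomial setting is exceptionally clean here: for every $d$ with $\deg d \leq n$ one has the exact identity $\#\{F \in \mathcal{M}_n : d \mid F\} = q^{n-\deg d}$, so there is no sieve remainder at all. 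The hypothesis $z \leq n/2$ guarantees $\deg[d_1,d_2] \leq 2z \leq n$, so the inequality collapses to $\Psi(n,z) \leq q^n \sum_{d_1,d_2} \lambda_{d_1}\lambda_{d_2}\, q^{-\deg[d_1,d_2]}$.

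Next I would apply the standard Selberg diagonalization, which identifies the minimum of this quadratic form subject to $\lambda_1 = 1$ as $1/G(z)$, where
\[
    G(z) := \sum_{\substack{d \mid P_z \\ \deg d \leq z}} h(d), \qquad h(d) := \prod_{P \mid d} \frac{1}{q^{\deg P}-1}.
\]
This yields $\Psi(n,z) \leq q^n/G(z)$, and it remains only to prove the lower bound $G(z) \geq z(1 - 1/q)$. Using the crude pointwise estimate $h(d) \geq q^{-\deg d}$ (immediate from $q^{\deg P}-1 \leq q^{\deg P}$ in each factor) and observing that the constraint $d \mid P_z$ is automatic for every squarefree $d$ of degree at most $z$ (since each prime factor of such $d$ has degree $\leq \deg d \leq z$), we obtain
\[
    G(z) \geq \sum_{\substack{d \text{ monic squarefree} \\ \deg d \leq z}} q^{-\deg d}.
\]

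Finally, the right-hand sum telescopes. The classical exact count says that the number of monic squarefree polynomials in $\mathcal{M}_k$ equals $1$ for $k=0$ and $q^k - q^{k-1} = q^k(1-1/q)$ for $k \geq 1$, so
\[
    G(z) \geq 1 + \sum_{k=1}^z q^{-k}\cdot q^k(1-1/q) = 1 + z(1-1/q),
\]
which is even stronger than required. The main obstacle is really just the bookkeeping behind Selberg's diagonalization; the absence of any sieve remainder in the polynomial setting, together with the clean exact formula for monic squarefree counts, makes the lower bound on $G(z)$ essentially free of estimation.
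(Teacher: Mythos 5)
Your argument is essentially the same as the paper's: both apply Selberg's sieve to $\mathcal{M}_n$, exploit that $z\leq n/2$ gives an exact (remainder-free) count $\#\{F\in\mathcal{M}_n:D\mid F\}=q^{n-\deg D}$, arrive at the bound $q^n/G(z)$ with $G(z)=\sum_{\deg d\leq z}\mu^2(d)/\Phi(d)$ (your $h(d)=\prod_{P\mid d}(|P|-1)^{-1}$ is exactly $1/\Phi(d)$ for squarefree $d$), and then drop to $\sum\mu^2(d)/|d|$ using the squarefree count; the paper simply writes out the diagonalization rather than citing it as standard. One small slip: the count of monic squarefree polynomials of degree $k$ is $q^k-q^{k-1}$ only for $k\geq 2$, while for $k=1$ it is $q$ (every monic linear polynomial is squarefree); since you are proving a lower bound this does not affect the conclusion, but the formula as stated is off by one at $k=1$.
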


\begin{proof}
    We introduce the sieve notation in our setting.
    \begin{enumerate}[label=\arabic*.]
        \item Let $\mathcal{A} = \mathcal{M}_n \subset \FF_q[T]$  and denote by $X := |\mathcal{A}| = q^n$. 
        \item Let $\mathcal{P} \subset \FF_q[T]$ be the set of all prime polynomials. 
        \item For $z>0$, define 
        \[
        \mathbf{P}(z) := \prod_{\substack{P\in \mathcal{P} \\ \deg P \leq z}} P.
        \]
        \item For $D \in \FF_q[T]$, let
        \[
            \mathcal{A}_D := \# \{F \in \mathcal{A}: D|F \}.
        \]
        If $\deg D \leq n$, then
        \begin{equation}\label{eq:size A_D}
            \mathcal{A}_D = \frac{q^n}{|D|} = \frac{X}{|D|}.
        \end{equation}
        \item We are interested in estimating the size of the sifted set
        \[
            \mathcal{S}(\mathcal{A}, \mathcal{P}, z) := \# \{F \in \mathcal{A}: (\mathbf{P}(z), F)  = 1 \} = \Psi(n,z).
        \]
    \end{enumerate}

    The assertion follows from the standard considerations of Selberg's sieve -- we provide the details for completeness.
    
    We have
    \[
        \Psi(n,z) =
        \sum_{F\in \mathcal{A}} \delta((\mathbf{P}(z),F)), 
    \]
    where $\delta(H) = \begin{cases}
            1, & \text{for } H=1\\
            0, & \text{else}
        \end{cases}$.

    Let $\Lambda=\{\lambda_D \}_{D \in \mathcal{M}}$ be a family of real parameters with $\lambda_1=1$. Then, for every $H \in \FF_q[T]$,
    \[
        \delta(H) \leq \ps{\sum_{D|H} \lambda_D}^2.
    \]
    (Our sums are always over monic polynomials.)
    Hence, 
    \[
        \Psi(n,z) \leq \sum_{F\in \mathcal{A}} \ps{\sum_{D|(\mathbf{P}(z),F)} \lambda_D}^2 = \sum_{F\in \mathcal{A}} \ps{\sum_{\substack{D|F \\ D|\mathbf{P}(z)}} \lambda_D}^2.
    \]
    Assume further that if $\deg D > z$, then $\lambda_D=0$. So, we can rewrite the right-hand side as
    \[
        \sum_{\substack{\deg D_1, \deg D_2 \leq z \\ D_1, D_2 | \mathbf{P}(z)}} \lambda_{D_1} \lambda_{D_2} \# \{F \in \mathcal{A}: D_1|F, D_2|F \} = 
        \sum_{\substack{\deg D_1, \deg D_2 \leq z \\ D_1, D_2 | \mathbf{P}(z)}} \lambda_{D_1} \lambda_{D_2} \# \{F \in \mathcal{A}: [D_1,D_2] | F \},
    \]
    Since $z \leq n/2$, we get that  $\deg [D_1,D_2] \leq n$, and hence, by \eqref{eq:size A_D}, the expression becomes
    \[
         \sum_{\substack{\deg D_1, \deg D_2 \leq z \\ D_1, D_2 | \mathbf{P}(z)}} \lambda_{D_1} \lambda_{D_2} \cdot \mathcal{A}_{[D_1,D_2]} = q^n \sum_{\substack{\deg D_1, \deg D_2 \leq z \\ D_1, D_2 | \mathbf{P}(z)}} \frac{\lambda_{D_1} \lambda_{D_2}}{|[D_1,D_2]|}.
    \]
    We write 
    \begin{equation}
        Q(\Lambda) = \sum_{\substack{\deg D_1, \deg D_2 \leq z \\ D_1, D_2 | \mathbf{P}(z)}} \frac{\lambda_{D_1} \lambda_{D_2}}{|[D_1,D_2]|},
    \end{equation}
    which is a quadratic form in the variables $\lambda_D$, $\deg D\leq z$. We will choose $\Lambda$ to minimize $Q$ under the constraint $\lambda_1=1$. We follow Selberg's approach:
    
    First, we diagonalize the form. 
    Using the equality $|F|=\sum_{D|F} \Phi(D)$, we obtain
    \begin{align*}
        Q(\Lambda) &= \sum_{\substack{\deg D_1, \deg D_2 \leq z \\ D_1, D_2 | \mathbf{P}(z)}} \frac{\lambda_{D_1} \lambda_{D_2}}{|D_1 D_2|} |(D_1,D_2)| = 
        \sum_{\substack{\deg D_1, \deg D_2 \leq z \\ D_1, D_2 | \mathbf{P}(z)}} \frac{\lambda_{D_1} \lambda_{D_2}}{|D_1 D_2|} \sum_{E|(D_1,D_2)} \Phi(E)  \\
        &= \sum_{\substack{\deg E \leq z \\ E|\mathbf{P}(z)}} \Phi(E) \sum_{\substack{E|D_1, E|D_2 \\ \deg D_1 \leq z, \deg D_2 \leq z \\ D_1|\mathbf{P}(z), D_2|\mathbf{P}(z)}} \frac{\lambda_{D_1} \lambda_{D_2}}{|D_1 D_2|} = 
        \sum_{\substack{\deg E \leq z \\ E|\mathbf{P}(z)}} \Phi(E) \ps{\sum_{\substack{E|D \\ \deg D \leq z \\ D | \mathbf{P}(z)}} \frac{\lambda_D}{|D|}}^2.
    \end{align*}
    Making the change of variables 
    \begin{equation} \label{eq:theta formula}
        \theta_E = \sum_{\substack{E|D \\ \deg D \leq z \\ D | \mathbf{P}(z)}} \frac{\lambda_D}{|D|}, \quad \deg E\leq z, \ E\mid P(z),
    \end{equation}
    we get that 
    \begin{equation}
         Q(\Lambda) = \sum_{\substack{\deg E \leq z \\ E|\mathbf{P}(z)}} \Phi(E) \theta_E^2.
    \end{equation}
    By the dual M\"{o}bius inversion formula\footnote{The dual M\"{o}bius inversion formula says that if $f(E) = \sum_{E\mid D\mid \mathbf{P}(z)} g(D)$, then $g(E) = \sum_{E\mid D\mid \mathbf{P}(z)} \mu(D/E) g(D)$. We omit the proof.} applied to \eqref{eq:theta formula}, we have
    \[
        \frac{\lambda_E}{|E|} = \sum_{\substack{E|D \\ \deg D \leq z \\ D | \mathbf{P}(z)}} \mu\bigg(\frac{D}{E}\bigg) \theta_D,
    \]
    and hence the constraint $\lambda_1=1$ transforms to
    \begin{equation}\label{constraint theta}
        L:=\sum_{\substack{\deg D \leq z \\ D| \mathbf{P}(z)}} \mu(D) \theta_D = 1.
    \end{equation}
    We apply the Lagrange multipliers method. Let $\nabla = (\partial_{\theta_E})_{\deg E\leq z, E\mid \mathbf{P}(z)}$ be the gradient operator. We want to solve the equations
    \[
        \nabla Q = C \nabla L \quad \mbox{and} \quad L=1
    \]
    in the variables $C,\theta_E$. So for each $E|\mathbf{P}(z)$ and $\deg E \leq z$, we have
    \[
        2\Phi(E) \theta_E = C \mu(E) \quad \longrightarrow \qquad \theta_E = \frac{C}{2} \frac{\mu(E)}{\Phi(E)}.
    \] 
    Plug this in \eqref{constraint theta} to get
    \[
        \frac{C}{2} = \frac{1}{\sum_{\deg D \leq z} \frac{\mu^2(D)}{\Phi(D)}}.
    \]
    Therefore, 
    \[
        \min Q(\Lambda) = \sum_{\substack{\deg E \leq z \\ E|\mathbf{P}(z)}} \frac{\mu^2(E)}{\Phi(E)\ps{\sum_{\deg D \leq z} \frac{\mu^2(D)}{\Phi(D)}}^2} = \frac{1}{\sum_{\deg D \leq z} \frac{\mu^2(D)}{\Phi(D)}} =: \frac{1}{S(z)}.
    \]
    Now, calculate 
    \begin{align*}
        S(z) = \sum_{\deg D \leq z} \frac{\mu^2(D)}{\Phi(D)} \geq \sum_{\deg D \leq z} \frac{\mu^2(D)}{|D|} &= \sum_{i=0}^z \frac{\#\{ D \in \mathcal{M}_i : D \textnormal{ is squarefree} \}}{q^i}  \\
        &= 2 + (1-\frac{1}{q})(z-2) \geq z(1-\frac{1}{q}).
    \end{align*}
    The last equality follows from \cite[Proposition~2.3]{NumberTheory_FF}.
    We finally conclude that
    \[
        \Psi(n,z) \leq q^n \min Q(\Lambda) \leq \frac{q^n}{z(1-\frac{1}{q})}, 
    \]
    as needed.
\end{proof}

The lower bound may be obtained by applying the following recursion formula for $\Psi(n,b)$. 

\begin{lemma} \label{lm:rough polynomials lower bound}
    Let $1\leq b=b(n) < n$ and $q$ a prime power. Then,
    \begin{equation}\label{eq:rec}
        n \cdot \Psi(n,b) \geq q^n - 2q^{n/2} + \sum_{b < \deg P \leq n-b-1} \deg P \cdot \Psi(n-\deg P, b).
    \end{equation}    
    Furthermore, we have
        \begin{equation}\label{eq:allb}
        \Psi(n,b) \geq \frac{q^n}{10b+5}
    \end{equation}
    and if $b$ is sufficiently large, we have
    \begin{equation}\label{eq:lb_rough_b_large}
        \Psi(n,b) \geq  \frac{q^n}{2b+2}.
    \end{equation}  
\end{lemma}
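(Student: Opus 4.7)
Plan. I would first prove the recursion \eqref{eq:rec} via a double-counting identity, then use it together with strong induction on $n$ to derive both lower bounds \eqref{eq:allb} and \eqref{eq:lb_rough_b_large}.

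For the recursion, the starting point is $n\Psi(n,b) = \sum_{F} \deg F$, where $F$ ranges over $b$-rough polynomials in $\mathcal{M}_n$, combined with the expansion $\deg F = \sum_{P \mid F} v_P(F) \deg P$ (over monic primes) and the identity $v_P(F) = \#\{k \geq 1 : P^k \mid F\}$. Swapping summations and noting that, when $\deg P > b$, the map $F \mapsto F/P^k$ is a bijection between $\{F \in \mathcal{M}_n : P^k \mid F,\ F \text{ is } b\text{-rough}\}$ and the $b$-rough polynomials in $\mathcal{M}_{n - k\deg P}$, one obtains the exact identity
\[
    n\Psi(n,b) \;=\; \sum_{\substack{P \text{ prime}\\ \deg P > b}} \deg P \sum_{\substack{k \geq 1\\ k \deg P \leq n}} \Psi(n - k\deg P, b).
\]
To reach \eqref{eq:rec} I would then discard the non-negative $k \geq 2$ contributions; note that for $n - b \leq \deg P \leq n - 1$ the factor $\Psi(n - \deg P, b)$ vanishes (a monic polynomial of degree between $1$ and $b$ has a prime factor of degree $\leq b$ and thus is not $b$-rough); and invoke Theorem~\ref{th:PPTs} to bound the $\deg P = n$ contribution by $n\pi_q(n) \geq q^n - 2q^{n/2}$.

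For the lower bounds \eqref{eq:allb} and \eqref{eq:lb_rough_b_large} I would induct on $n$, with $b$ and $q$ fixed. The base case is $b < n \leq 2b$: any $b$-rough $F \in \mathcal{M}_n$ must be prime, since two prime factors of degree $>b$ would force $\deg F \geq 2(b+1) > 2b$, so $\Psi(n,b) = \pi_q(n)$ and the bound reduces to a direct consequence of the PPT; a finite list of small $(n,q)$ for which the estimate $(q^n - 2q^{n/2})/n$ is too weak is handled against the exact formula $n\pi_q(n) = \sum_{d \mid n} \mu(n/d) q^d$. For the inductive step $n > 2b$, I would plug the inductive hypothesis $\Psi(m,b) \geq q^m/C$ into \eqref{eq:rec}, use $j\pi_q(j) q^{n-j} \geq q^n - 2q^{n - j/2}$ from the PPT, and dominate the resulting error by the geometric series $\sum_{j \geq b+1} q^{-j/2} \leq q^{-(b+1)/2}/(1 - q^{-1/2})$. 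The claim $\Psi(n,b) \geq q^n/C$ then reduces to a concrete arithmetic inequality in $n$, $b$, $q$, solved with $C = 10b+5$ for \eqref{eq:allb} and with $C = 2b+2$ for \eqref{eq:lb_rough_b_large}.

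The main obstacle is closing the induction uniformly in $q \geq 2$. The tightest configuration is $n = 2b+1$, where the sum in \eqref{eq:rec} is empty and the inductive step reduces to $C(1 - 2q^{-n/2}) \geq n$, i.e.\ $C \geq (2b+1)/(1 - 2q^{-(2b+1)/2})$. The generous $C = 10b+5$ absorbs this for every $b \geq 1$ and every $q \geq 2$, whereas the sharp $C = 2b+2$ works only once $q^{(b+1)/2}$ dominates a linear function of $b$, which is the precise meaning of ``$b$ sufficiently large'' in \eqref{eq:lb_rough_b_large}. A secondary nuisance is that the PPT bound $n\pi_q(n) \geq q^n - 2q^{n/2}$ becomes vacuous for very small $(n,q)$, so a handful of base-case values must be verified by inspection.
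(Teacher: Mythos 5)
Your proposal is correct and follows essentially the same route as the paper: the same double‑counting identity $n\Psi(n,b) = \sum_{P^k\mid F}\deg P$ (with the $k\geq 2$ and small‑degree terms discarded and $\deg P = n$ handled via the prime polynomial theorem) gives the recursion, and then strong induction on $n$ with base cases for $b<n\leq 2b+1$ closes both lower bounds, the constants $10b+5$ and $2b+2$ arising from the same arithmetic inequalities you identify. Your observations about the tight case $n=2b+1$ and the need to verify a few very small $(n,q)$ against the exact formula for $\pi_q(n)$ match what the paper does.
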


\begin{proof}
    Let us begin with the first assertion. By changing the order of summation, we get
    \begin{equation} \label{eq:recurrence}
    \begin{split}
        n \cdot \Psi(n,b) 
            &= \sum_{\substack{F \in \mathcal{M}_n \\ P^-(F) > b}} n 
            = \sum_{\substack{F \in \mathcal{M}_n \\ P^-(F) > b}} \sum_{P^k|F} \deg P  \geq \sum_{b<\deg P \leq n}  \deg P\sum_{\substack{F \in \mathcal{M}_n \\ P^-(F)>b \\ P|F}}1
            \\
            &
            = \sum_{b<\deg P \leq n}  \deg P \cdot \#\{F \in \mathcal{M}_n,P^-(F) >b,P|F\}
    \end{split}
    \end{equation}
    For a fixed $P$ with $\deg P > b$, the map $F\mapsto F/P$ gives a bijection between $\{F \in \mathcal{M}_n,P^-(F) >b,P|F\}$ and $b$-rough polynomials of degree $n-\deg P$. Moreover, if $\deg P \neq n$, then $\deg P \leq n-b-1$ since $F$ is $b$-rough. So, from \eqref{eq:recurrence} and the lower bound in \eqref{PPT}, we get that
    \[
    \begin{split}
        n \cdot \Psi(n,b) &\geq \sum_{b<\deg P \leq n} \deg P \cdot \Psi(n-\deg P, b) \\ &\geq q^n - 2q^{n/2} + \sum_{b<\deg P \leq n-b-1} \deg P \cdot \Psi(n-\deg P, b).
    \end{split}
    \]
    This finishes the proof of the first assertion.
    
    To prove \eqref{eq:lb_rough_b_large}, fix a sufficiently large $b \in \NN$. We prove by induction on $n>b$ that 
    \begin{equation}\label{eq:induction_hypothesis}
        \Psi(n,b) \geq \frac{q^n}{2b+2}.
    \end{equation} 
    If $n = b+1$, then 
    \[
        \Psi(n,b) = \pi_q(n) \geq \frac{q^n}{n} - \frac{2q^{n/2}}{n} \geq \frac{q^n}{2n} = \frac{q^n}{2b+2}.
    \]
    Now we assume \eqref{eq:induction_hypothesis} for $b < m < n$ and prove it for $n$. Using the recurrence formula \eqref{eq:rec}, we deduce
    \begin{align*}
        n \cdot \Psi(n,b) &\geq q^n - 2q^{n/2} + \sum_{b<\deg P \leq n-b-1} \deg P \cdot \Psi(n-\deg P, b) \\
        &= q^n - 2q^{n/2} + \sum_{i=b+1}^{n-b-1} \pi_q(i) \cdot i \cdot \Psi(n-i, b)
    \end{align*}
    By \eqref{PPT} and the induction hypothesis \eqref{eq:induction_hypothesis},  
    \begin{align*}
        n \cdot \Psi(n,b)&\geq q^n - 2q^{n/2} +  \sum_{i=b+1}^{n-b-1} (q^i - 2q^{i/2}) \cdot \ps{\frac{q^{n-i}}{2b+2}} = \frac{q^n(n+1)}{2b+2} - 2q^{n/2} -\frac{2q^n}{2b+2}\sum_{i=b+1}^{n-b-1} {q^{-i/2}}
    \end{align*}
    Plug in the inequality $\sum_{i=b+1}^{n-b-1}q^{-i/2} \leq q^{-(b+1)/2}(1-q^{-1/2})^{-1}$ to get 
    \begin{align*}
    n \cdot \Psi(n,b)&\geq
        \frac{q^n}{2b+2} \cdot n + q^n \ps{\frac{1-\frac{2q^{-\frac{b+1}{2}}}{1-q^{-1/2}} - 2(2b+2) \cdot q^{-n/2}}{2b+2}}.
    \end{align*}
    Since $b$ is sufficiently large, 
    \[
        1-\frac{2q^{-\frac{b+1}{2}}}{1-q^{-1/2}} - 2(2b+1) \cdot q^{-n/2} \geq 0
    \]
    hence\eqref{eq:lb_rough_b_large} follows.
    
    Finally, we prove \eqref{eq:allb}. If $b$ is sufficiently large, we are done by \eqref{eq:lb_rough_b_large}. To this end, assume that $1\leq b \leq b_0$ for some fixed $b_0$. 
    If $b=1$ and $n=2$, we have
    \begin{equation}\label{uniform lower bound rough: base 1}
        \Psi(2,1) = \pi_q(2) = \frac{q^2}{2} - \frac{q}{2} \geq \frac{q^2}{4}\geq \frac{q^n}{10b+5}.
    \end{equation} 
    
    Next,  we assume that $3 \leq n \leq 2b+1$. Then $q^{n/2} \leq \frac{q^n}{2^{3/2}}$ and $\Psi(n,b)=\pi_q(n)$. So  by \eqref{PPT} we conclude that 
    \begin{equation}\label{uniform lower bound rough: base 2}
        \Psi(n,b)\geq \frac{q^n}{n} - \frac{2q^{n/2}}{n} \geq (1-2^{-1/2})\frac{q^n}{n} \geq (1-2^{-1/2})\frac{q^n}{2b+1} \geq \frac{q^n}{10 b+5}.
    \end{equation}
    
    Finally, we assume that $n\geq 2b+2$ and proceed by induction. So, we have $\psi(m,b)\geq \frac{q^m}{10b+5}$ for all $b\leq m<n$. We repeat the argument we used to prove \eqref{eq:lb_rough_b_large} with $10b+5$ replacing $2b+2$. It gives
    \[
        n \cdot \Psi(n,b) \geq \frac{q^n}{10 b+5} \cdot n + \frac{q^n}{10 b+5} \ps{4-\frac{2q^{-\frac{b+1}{2}}}{1-q^{-1/2}} - 2(10b+5) \cdot q^{-n/2} + 8b}.
    \]
    It is immediate that the term in the brackets is positive whenever $n\geq 4$, so \eqref{eq:allb} follows.
\end{proof}

The next result shows that rough polynomials are equidistributed in arithmetic progressions up to a constant, as long as $\deg M \leq (\frac{1}{2}-\epsilon) b$.

\begin{theorem}\label{lm:rough_polynomialsAP} [Rough Polynomials in Arithmetic Progressions]
    Let $\epsilon > 0$ and let $q$ be a prime power. Let $1\leq b=b(n) < n$, and $A=A(n,q),M=M(n,q) \in\FF_q[T]$ be such that $(A,M)=1$ and $\deg M \leq b(1/2-\epsilon)$. Then,
    \[
    \Psi(n,b;A,M) \asymp_\epsilon \frac{q^n}{\Phi(M)} \cdot \frac{1}{b}, \qquad q^n \to \infty,
    \]    
uniform in all such parameters.
\end{theorem}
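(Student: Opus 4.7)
The plan is to prove the upper and lower bounds separately, adapting the sieve and recursion of Section~\ref{section:rough polynomials} to track the extra factor $1/\Phi(M)$.

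For the upper bound, I will apply Selberg's sieve to $\mathcal{A}=\{F\in\mathcal{M}_n:F\equiv A\bmod M\}$, whose size is $|\mathcal{A}|=q^{n-\deg M}$. Since $(A,M)=1$, every $F\in\mathcal{A}$ is coprime to $M$, so I only sieve by primes $P$ with $(P,M)=1$ and $\deg P\le b$; the level-set formula $|\mathcal{A}_D|=|\mathcal{A}|/|D|$ holds for $(D,M)=1$. Running the computation of Lemma~\ref{lm:Sieve upper bound} verbatim yields $\Psi(n,b;A,M)\le|\mathcal{A}|/S_M(b)$, where $S_M(b)\ge\sum_{\deg D\le b,\,(D,M)=1,\,\mu^2 D=1}|D|^{-1}$. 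To bound $S_M(b)$ from below, I factor each squarefree $D$ uniquely as $D=D_1D_2$ with $D_1\mid\mathrm{rad}(M)$ and $(D_2,M)=1$. Since $\deg\mathrm{rad}(M)\le\deg M<b$, this yields
\[
T(b):=\!\!\sum_{\deg D\le b,\,\mu^2 D=1}\!\!\frac{1}{|D|}=\sum_{D_1\mid\mathrm{rad}(M)}\frac{S_M(b-\deg D_1)}{|D_1|}\le S_M(b)\prod_{P\mid M}\Big(1+\frac{1}{|P|}\Big),
\]
and $\prod_{P\mid M}(1+|P|^{-1})\le|M|/\Phi(M)$ together with $T(b)\ge b(1-1/q)$ from Lemma~\ref{lm:Sieve upper bound} gives $S_M(b)\gg b\,\Phi(M)/|M|$. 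Substituting back yields $\Psi(n,b;A,M)\ll q^n/(b\Phi(M))$ with an absolute constant.

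For the lower bound, I will imitate Lemma~\ref{lm:rough polynomials lower bound}. Applying $n=\sum_{P^k\mid F}\deg P$ to $b$-rough $F\equiv A\bmod M$ and swapping summation, using that any prime $P\mid F$ with $\deg P>b\ge\deg M$ has $(P,M)=1$ (so $AP^{-k}\bmod M$ is well defined), one obtains
\[
n\,\Psi(n,b;A,M)\ge n\,\pi_q(n;A,M)+\!\!\sum_{b<\deg P\le n-b-1}\deg P\cdot\Psi(n-\deg P,\,b;\,AP^{-1},\,M).
\]
I then prove $\Psi(n,b;A,M)\ge c_\epsilon\,q^n/(b\,\Phi(M))$ by induction on $n$. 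In the base range $b<n\le 2b+1$ the $b$-rough polynomials of degree $n$ are exactly the primes, so $\Psi=\pi_q(n;A,M)$ and \eqref{PPTAP} yields the bound with constant $b/n\ge 1/3$. For $n\ge 2b+2$, substituting the induction hypothesis and applying \eqref{PPT} to the inner prime sum gives
\[
n\,\Psi(n,b;A,M)\ge\frac{q^n}{\Phi(M)}\Big[1+\frac{c_\epsilon(n-2b-1)}{b}-o(1)\Big]=\frac{q^n}{\Phi(M)}\Big[\frac{c_\epsilon n}{b}+1-2c_\epsilon-\frac{c_\epsilon}{b}-o(1)\Big],
\]
so any $c_\epsilon<1/3$ closes the induction once the $o(1)$ is absorbed.

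The main technical point is verifying that all error terms are uniformly $o(1)$ as $q^n\to\infty$ under $\deg M\le(1/2-\epsilon)b$. The dominant error, the $O(q^{n/2}\deg M)$ in \eqref{PPTAP}, contributes a ratio at most $\deg M\cdot q^{\deg M-n/2}$ relative to $q^n/\Phi(M)$. In the inductive regime $n\ge 2b+2$ this is bounded by $n\,q^{-(1+\epsilon)n/4}$, and in the base range $n\le 2b+1$ by $b\,q^{-\epsilon b-1/2}$; both tend to $0$ as $q^n\to\infty$, since the only excluded case (namely $b$ and $q$ both bounded) is incompatible with $q^n\to\infty$ in the range $n\le 2b+1$. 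The geometric tail $\sum_{i>b}q^{-i/2}=O(q^{-b/2})$ is handled analogously. This error bookkeeping is the only real obstacle: one must choose $c_\epsilon$ small enough (depending only on $\epsilon$) so that all error terms can be simultaneously absorbed across both the base case and the inductive step.
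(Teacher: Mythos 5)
Your proposal takes a genuinely different route from the paper's. The paper restricts attention to rough polynomials for which every prime degree occurs with multiplicity at most $3$ (the quantity $\vartheta$), computes that count in APs via a combinatorial split on the smallest prime factor together with \eqref{PPTAP}, shows the $m_j\geq 4$ contribution is negligible, and then averages over residues to pass from $\vartheta$ to $\vartheta_A$. You instead transport the Selberg sieve and the recursion of Section~\ref{section:rough polynomials} directly into the AP setting. This is more elementary and self-contained; in particular, your upper bound (sieve only by primes coprime to $M$, then compare $S_M(b)$ to $T(b)$ via the $D=D_1D_2$ factorization) is a clean adaptation. That said, there are two gaps.

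In the upper bound, running Lemma~\ref{lm:Sieve upper bound} ``verbatim'' with level $z=b$ requires $\deg[D_1,D_2]\leq n-\deg M$, i.e.\ $2b+\deg M\leq n$. This fails once $b$ is near $n/2$ (roughly $b>2n/5$), and you do not address that range. It is fixable --- take $z=\min(b,\lfloor(n-\deg M)/2\rfloor)$ and check $z\gg_\epsilon b$ when $b\leq n/2$, and reduce to $\Psi=\pi_q(n;A,M)$ when $b>n/2$ --- but it is not verbatim.

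The more substantive error is in the lower bound. The geometric tail $\sum_{d>b}q^{-d/2}\leq q^{-(b+1)/2}/(1-q^{-1/2})$ does \emph{not} tend to $0$ as $q^n\to\infty$: if $b$ and $q$ are both bounded (so only $n\to\infty$), it is a fixed positive constant, roughly $3.41$ at $q=2,\ b=1$. So it cannot be folded into the $o(1)$ alongside the $\deg M\cdot q^{\deg M-n/2}$ error, which genuinely vanishes (your observation that $n\leq 2b+1$ with $b$ bounded forces $q\to\infty$ only applies to the base case, not to the inductive regime $n\geq 2b+2$, where the tail is the problem). Consequently ``any $c_\epsilon<1/3$ closes the induction'' is false: at $q=2,\ b=1$ the inductive inequality requires roughly $1-6.4\,c_\epsilon\geq 0$, i.e.\ $c_\epsilon\lesssim 0.15$. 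The paper's Lemma~\ref{lm:rough polynomials lower bound} hits exactly this obstruction and circumvents it by proving the clean bound $q^n/(2b+2)$ only for $b$ large, while the uniform-in-$b$ bound \eqref{eq:allb} carries the much weaker constant $1/(10b+5)$ precisely to absorb the tail. Your induction will close, but only after an explicit worst-case bound on the tail and a correspondingly smaller $c_\epsilon$ (or a two-step split as in the paper); as written the constant bookkeeping is wrong.
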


    Gorodetsky \cite[Theorem 2.1]{Gorodetsky} obtains estimates on character sums. These imply that $b$-rough polynomials equidistribute amongst the invertible residues in an arithmetic progression of large modulus. Hence, together with Theorem~\ref{prop:Uniform Rough Polynomials}, it proves Theorem~\ref{lm:rough_polynomialsAP}. 

    We provide here an alternative proof, which is more direct, and hence shorter and self-contained.

\begin{proof}
    Since $\deg M \leq b(1/2-\epsilon)$, by \eqref{PPTAP}, for all $j \geq b$, 
    \begin{equation}\label{hypothesis on S}
        \pi_q(j;A,M) \asymp_{\epsilon} \frac{q^j}{j \cdot \Phi(M)}, \qquad q^n \to \infty.
    \end{equation}
    If $\deg M = 0$, the assertion follows from Theorem~\ref{prop:Uniform Rough Polynomials}. So, assume $\deg M \geq 1$.
    For $F \in \mathcal{M}_n$, denote by $m_i$ the number of primes $P|F$ with $\deg P = i$, and let $\lambda(F) = (m_1,m_2,\dots,m_n)$. We denote by $(\boldsymbol{e}_1, \ldots, \boldsymbol{e}_n)$ the standard basis of $\mathbb{R}^n$. 
    For $E \in \FF_q[T]/M\FF_q[T]$, let 
    \begin{align*}
        \xi(n,\boldsymbol{m}) &:= | \{F\in \mathcal{M}_n : \lambda(F) = \boldsymbol{m}\}|\cdot 1_{\{m_1=\cdots=m_{b}=0\}},\\
        \xi(n,\boldsymbol{m},E,M) &:= |\{F\in \mathcal{M}_n : \lambda(F) = \boldsymbol{m},\  F \equiv E \mod{M}\}|\cdot 1_{\{m_1=\cdots=m_{b}=0\}}.
    \end{align*}
    Moreover, we denote
    \begin{align*}
        \vartheta &:=|\{F \in \mathcal{M}_n: P^-(F)>b, \forall j: m_j \leq 3 \}|= \sum_{\substack{ \forall j: m_j\leq 3}} \xi(n,\boldsymbol{m})
        , \\
        \vartheta_E &:= |\{F \in \mathcal{M}_n: P^-(F)>b, \forall j: m_j \leq 3, F \equiv E \mod M \}|=\sum_{\forall j: m_j \leq 3} \xi(n,\boldsymbol{m},E,M) 
        ,
    \end{align*}
    where the sums run over tuples $\boldsymbol{m} = (m_1,...,m_n)$ such that $m_i\geq 0$ and $\sum im_i = n$, which we henceforth call \emph{partitions} of $n$. 
    For a partition $\boldsymbol{m} = (m_1,m_2,...)$ of $n\geq 1$, we let  $\alpha = \alpha(\boldsymbol{m})$ be the minimal index such that $m_\alpha \neq 0$.
    
    Since $\Psi(n,b) = \sum \xi(n,\boldsymbol{m})$, 
    \begin{align}\label{eq:factor_Psi_m_jleq3}
        \Psi(n,b)   &= \vartheta+ \sum_{\exists j: m_j \geq 4} \xi(n,\boldsymbol{m}).
    \end{align}
    First, we show that the sum to the right is negligible:
    \begin{align*}
    \begin{split}
         \sum_{\exists j: m_j \geq 4} \xi(n,\boldsymbol{m}) &\leq
        \sum_{j=b+1}^n | \{F\in\mathcal{M}_n: P^-(F) > b, m_{j} \geq 4\}| \leq \sum_{j=b+1}^n \Psi(n-4j,b) \cdot \pi_q(j)^4\ .
    \end{split}
    \end{align*}
    If $\frac{k}{2}<b<k$, then $\Psi(k,b) = \pi_q(k) \leq \frac{q^k}{k} \leq \frac{2q^k}{b}$. If $b\leq k/2$, by Lemma~\ref{lm:Sieve upper bound}, we have $\Psi(k,b)\leq \frac{q^k}{b(1-q^-1)} \leq \frac{2q^k}{b}$. If $b\geq k$, then $\Psi(k,b)\leq 1$ (with equality iff $k=0$). By \eqref{PPT}, we have $\Psi(0,b)\cdot \pi_q(j)^4 \leq \frac{q^n}{j^4} \leq \frac{q^n}{(b+1)^4}$. Together with \eqref{PPT}, we obtain
    \begin{equation}\label{eq:small error}
    \begin{split}
        \sum_{\exists j: m_j \geq 4} \xi(n,\boldsymbol{m})  &\leq \sum_{j=b+1}^n \frac{2q^{n-4j}}{b} \frac{q^{4j}}{j^4} + \frac{q^n}{(b+1)^4} = \frac{2q^n}{b} \sum_{j=b+1}^n \frac{1}{j^4} + \frac{q^n}{(b+1)^4}  \\ 
        &\leq \frac{2q^n}{b} \int_b^\infty \frac{dx}{x^4} + \frac{q^n}{(b+1)^4} \leq \frac{2q^n}{3b^4} + \frac{q^n}{(b+1)^4} \leq \frac{q^n}{27b},
    \end{split}
    \end{equation}
    where in the last inequality we used the fact that $b \geq 3$, which follows from the assumption $\deg M \leq (1/2-\epsilon)b$. Inserting \eqref{eq:small error} and the bound in Lemma \ref{lm:rough polynomials lower bound} to \eqref{eq:factor_Psi_m_jleq3}, we get
    \[
        \vartheta \geq \frac{q^n}{10b+5} - \frac{q^n}{27b} \gg \frac{q^n}{b},
    \]
    uniformly in $q,n,b$.
    On the other hand, by Lemma \ref{lm:Sieve upper bound} we get that $\vartheta \leq \Psi(n,b) \leq \frac{q^n}{b(1-q^{-1})}$. Hence,
    \begin{equation}\label{eq:theta_asymp}
        \vartheta \asymp \frac{q^n}{b}.
    \end{equation}
    We show that $\vartheta$ is equidistributed (up to a constant) in arithmetic progressions.
    Let 
    $        \chi(\boldsymbol{m},i,j) := \begin{cases}
        1 & \alpha(\boldsymbol{m})=j, m_j=i \\
        0 & \textnormal{otherwise}
    \end{cases}.$
    For $D \in (\FF_q[T]/M\FF_q[T])^*$, we calculate $\vartheta_D$ by splitting the event according to the value of $j:= \alpha(\boldsymbol{m})$ and $i:=m_j$:
    \begin{equation}
    \begin{split}
        \vartheta_D &=\sum_{\forall j: m_j \leq 3} \xi(n,\boldsymbol{m},D,M)\\
        &= \sum_{i=1}^3 \sum_{j=b+1}^n \sum_{\boldsymbol{m}} \chi(\boldsymbol{m},i,j) \cdot \xi(n,\boldsymbol{m},D,M).
     \end{split}
    \end{equation}
    Now,  if $m_j=i$, we have
    \[
        \xi(n,\boldsymbol{m},D,M) = \sum_C \xi(n-ij, \boldsymbol{m}-i\boldsymbol{e}_j, C,M) \cdot \xi(ij,i\boldsymbol{e}_j,C^{-1}D,M),
    \]
    where we denote by $\sum_C$ to be the sum over monic $C \in (\FF_q[T]/M\FF_q[T])^*$.
    Using this observation, we obtain
    \begin{equation}\label{eq:theta_calc}
        \vartheta_D = \sum_C \sum_{i=1}^3 \sum_{j=b+1}^n \mathop{\sum}_{\boldsymbol{m}} \chi(\boldsymbol{m},i,j) \xi(n-ij ,\boldsymbol{m} -i \boldsymbol{e}_j,C,M)  \xi(ij,i \boldsymbol{e}_j, C^{-1}D,M).
    \end{equation}
    We denote the number of prime $l$-roots of $E$ modulo $M$ by
    \begin{align*}
        \Gamma_q(n,l,E,M) := \# \{P \in \mathcal{M}_n: P \mbox{ is prime}, P^l \equiv E \mod M \}.
    \end{align*}
    Since $\deg M \leq \frac{1}{2}j$, it follows that  $\Gamma_q(j,i,B,M) \leq \pi_q(j) \leq \frac{q^j}{j} \ll \frac{q^{2j}}{\Phi(M) j^2}$.
    Then, by \eqref{hypothesis on S}, we estimate $\xi(ij,i\boldsymbol{e}_j,B,M)$:
    \begin{equation*}
    \begin{split}
        \xi(j,\boldsymbol{e}_j,B,M) &= \pi_q(j,B,M) \asymp_{\epsilon} \frac{q^j}{\Phi(M) \cdot j}, \\
        \xi(2j,2\boldsymbol{e}_j,B,M) &= \frac{1}{2} \sum_{E} \pi_q(j,E,M) \cdot \pi_q(j,BE^{-1},M) + \frac{1}{2} \Gamma_q(j,2,B,M) \\
        &\asymp_{\epsilon} \frac{q^{2j}}{\Phi(M)j^2},
    \end{split}
    \end{equation*}

    Similarly,
    \begin{align*}
        \xi(3j,3\boldsymbol{e}_j,B,M)  &= \frac{1}{6}\sum_{E_1,E_2} \pi_q(j,E_1,M) \cdot \pi_q(j,E_2,M) \cdot \pi_q(j,(E_1E_2)^{-1}B,M) \\
        &\quad + \frac{1}{3} \sum_{E} \pi_q(j,E,M) \cdot \pi_q(j,BE^{-2}) + \frac{1}{2} \Gamma_q(j,3,B,M) \\
        &\asymp_{\epsilon} \frac{q^{3j}}{\Phi(M) j^3}.
    \end{align*}
    In particular, these estimates are independent of $B$.
    By \eqref{eq:theta_calc}, $\vartheta_D$ is also independent of $D$, up to constants, say $\vartheta_D \asymp_\epsilon \vartheta_A$. Thus,
    \begin{equation*}
        \vartheta = \sum_D \vartheta_D \asymp_\epsilon \Phi(M) \cdot \vartheta_A,
    \end{equation*}
    so by \eqref{eq:theta_asymp} we get
    \begin{equation*}
        \vartheta_A \asymp_{\epsilon} \frac{q^n}{b \cdot \Phi(M)}  .
    \end{equation*}
    Going back to $\Psi(n,b,A,M)$, we write
    \begin{equation}\label{eq:split_mod}
        \Psi(n,b,A,M)  
        = \vartheta_A + \sum_{\exists j: m_j \geq 4} \xi(n,\boldsymbol{m},A,M).
    \end{equation}
    Similarly to \eqref{eq:small error}, we bound the second term:
    \begin{align*}
        \sum_{\exists j: m_j \geq 4} \xi(n,\boldsymbol{m},A,M) &\leq \sum_{j=b+1}^n \# \{F \in \mathcal{M}_n: P^-(F) > b, m_j \geq 4, F \equiv A \mod M \} \\
        &\leq \sum_{j=b+1}^n \sum_{C} \Psi(n-4j,b,C,M) \cdot \xi(4j, 4\boldsymbol{e}_j,AC^{-1},M).
    \end{align*}
    Now, by \eqref{hypothesis on S} we have
    \begin{align*}
        \xi(4j, 4\boldsymbol{e}_j,AC^{-1},M) &\leq \sum_{E_1,E_2,E_3} \ps{\prod_{i=1}^3 \pi_q(j,E_i,M)} \cdot \pi_q(j, A(CE_1E_2E_3)^{-1},M) \\
        &\ll_{\epsilon} \frac{q^{4j}}{\Phi(M) \cdot j^4},
    \end{align*}
    so by Theorem \ref{prop:Uniform Rough Polynomials} we obtain
    \begin{align*}
        \sum_{\exists j: m_j \geq 4} \xi(n,\boldsymbol{m},A,M) &\ll_{\epsilon} \sum_{j=b+1}^n \sum_C \Psi(n-4j,b,C,M) \frac{q^{4j}}{\Phi(M) \cdot j^4} \\
        &= \sum_{j=b+1}^n \Psi(n-4j,b) \frac{q^{4j}}{\Phi(M) \cdot j^4} \\
        &\ll \sum_{j=b+1}^n \frac{q^n}{b\cdot \Phi(M) \cdot j^4} 
        \ll \frac{q^n}{\Phi(M) \cdot b^4}.
    \end{align*}
    Hence, 
    \(
        \Psi(n,b,A,M) = \vartheta_A+O_{\epsilon}\ps{\frac{q^n}{\Phi(M)\cdot b^4}} \asymp_{\epsilon} \frac{q^n}{\Phi(M) \cdot b},
    \)
    as  claimed.
\end{proof}

\section{Arithmetic Lower Bound} \label{sec:arithmetic_lb}

In this section, we bound $H(n,b;A,M)$ and $H(n,b;A_1,M_1,A_2,M_2)$ from below by a weighted sum over the number of degrees of divisors, which we bound in the next section. Our proof follows the arguments of Ford \cite{ford_H_y_2y} and their adaptation to function fields by Meisner \cite{Meisner_MT}. 
For a polynomial $H\in\mathbb{F}_q[T]$, let
\begin{align*}
    \mathcal{L}(H) &= \left\{d:d=\deg(D) \textnormal{ for some } D|H\right\}, \\
    L(H) &= |\mathcal{L}(H)|.
\end{align*}

\begin{lemma}\label{lm:main_lower_bound}
    Let $0<\epsilon<\frac{1}{27}$, $1 \ll_{\epsilon} b=b(n,q)\leq n/2$. Let $M_i, A_i\in \FF_q[T],\ i=1,2$, and $M = [M_1,M_2]$.  Assume $(A_i,M)=1$, $\deg M_i \leq (1/2-\epsilon) b$, for $i=1,2$. Then,
    \begin{align*}
        |H(n,b;A_1,A_2,M_1,M_2)| &\gg_{\epsilon} \frac{q^{n}}{\Phi(M_1)\cdot\Phi(M_2)} \cdot \frac{1}{b^2} \sum_{\substack{\deg H \leq \frac{\epsilon}{7} b \\ (H,M)=1}}  \frac{L(H)}{|H|}, \textnormal{ and }\\
        |H(n,b;A_2,M_2)| &\gg_{\epsilon} \frac{q^{n}}{\Phi(M_2)} \cdot \frac{1}{b^2} \sum_{\substack{\deg H\leq \frac{\epsilon}{7} b \\ (H,M_2)=1}}  \frac{L(H)}{|H|}, \qquad \mbox{as $q^n\to \infty$}
    \end{align*}
\end{lemma}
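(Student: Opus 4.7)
\emph{Plan.} I would follow Ford's strategy as adapted to function fields by Meisner, with the new ingredient of imposing compatible arithmetic progression constraints on the rough factors and then invoking Theorem~\ref{lm:rough_polynomialsAP}. First set the roughness threshold $b' := b(1-2\epsilon)/(1-\epsilon)$; this choice is exactly calibrated so that the hypothesis $\deg M_i \leq (1/2-\epsilon)b$ becomes $\deg M_i \leq (1/2-\epsilon/2)b'$, and Theorem~\ref{lm:rough_polynomialsAP} therefore gives $\Psi(m,b';E,M_i) \asymp_\epsilon q^m/(\Phi(M_i)\,b)$ for every $(E,M_i)=1$ and every relevant $m$ (using $b' \asymp_\epsilon b$). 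Parametrize candidate products by tuples $(H,d,R_1,R_2)$, where $H$ is a monic polynomial of degree at most $\epsilon b/7$ with $(H,M)=1$; $d\in\mathcal{L}(H)$, with a fixed choice $D=D_{H,d}$ of a divisor of $H$ with $\deg D=d$; $R_1\in\mathcal{M}_{b-d}$ is $b'$-rough and $R_1\equiv A_1 D^{-1}\pmod{M_1}$; and $R_2\in\mathcal{M}_{n-b-\deg H+d}$ is $b'$-rough and $R_2\equiv A_2(H/D)^{-1}\pmod{M_2}$. For each such tuple, $F:=HR_1R_2$ factors as $F=G_1G_2$ with $G_1:=DR_1\in\mathcal{M}_b$ satisfying $G_1\equiv A_1\pmod{M_1}$ and $G_2:=(H/D)R_2\in\mathcal{M}_{n-b}$ satisfying $G_2\equiv A_2\pmod{M_2}$, so $F\in H(n,b;A_1,A_2,M_1,M_2)$.

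\emph{Counting and multiplicity.} By Theorem~\ref{lm:rough_polynomialsAP}, for each admissible $(H,d)$ the number of pairs $(R_1,R_2)$ is $\gg_\epsilon q^{n-\deg H}/(\Phi(M_1)\Phi(M_2)\,b^2)$. Summing over $d\in\mathcal{L}(H)$ (which contributes exactly the factor $L(H)$ since $\mathcal{L}(H)\subseteq[0,\deg H]\subseteq[0,b]$) and then over admissible $H$,
\[
\Sigma := \#\{(H,d,R_1,R_2)\} \gg_\epsilon \frac{q^n}{\Phi(M_1)\Phi(M_2)\,b^2}\sum_{\substack{\deg H\leq \epsilon b/7 \\ (H,M)=1}}\frac{L(H)}{|H|}.
\]
To pass from this tuple-count to a lower bound on $|H(n,b;A_1,A_2,M_1,M_2)|$, one must control the multiplicity with which each $F$ is counted in $\Sigma$. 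Since $\deg H\leq\epsilon b/7<b'$ and $R_1,R_2$ are $b'$-rough, $F$ has no prime factor of degree in $(\epsilon b/7,b']$, and hence $H$ is uniquely determined by $F$ as the product of its prime divisors of degree at most $\epsilon b/7$. Given $(F,H)$, each tuple mapping to $F$ corresponds to a factorization $F=G_1G_2$ with $G_1\in\mathcal{M}_b$ meeting the AP condition and with $\gcd(G_1,H)=D_{H,d}$ for the associated $d$, from which $(R_1,R_2)=(G_1/D,G_2/(H/D))$ is read off. I would bound the resulting multiplicity via Cauchy--Schwarz, $|H(n,b;A_1,A_2,M_1,M_2)|\geq \Sigma^2/\sum_F\mathrm{mult}(F)^2$, and estimate the second moment by counting pairs of such structured factorizations of a common $F$; this reduces to counting pairs of $b'$-rough divisors of $F/H$ of prescribed degree in prescribed residue classes, which is once again accessible through Theorem~\ref{lm:rough_polynomialsAP}, yielding $\sum_F\mathrm{mult}(F)^2\ll_\epsilon \Sigma$.

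\emph{Second bound and main obstacle.} For $|H(n,b;A_2,M_2)|$, only $F$ itself is constrained by the arithmetic progression $A_2\pmod{M_2}$; I would drop the AP constraint on $R_1$ (counting it via the unconditional Theorem~\ref{prop:Uniform Rough Polynomials}) and instead require $R_2\equiv A_2(HR_1)^{-1}\pmod{M_2}$, which is well-defined since $(HR_1,M_2)=1$ holds automatically. The same parametrization, counting, and multiplicity argument then yield the claimed lower bound with $\Phi(M_1)\Phi(M_2)$ replaced by $\Phi(M_2)$ and the sum restricted to $H$ with $(H,M_2)=1$. The main obstacle in both bounds is the second-moment estimate $\sum_F\mathrm{mult}(F)^2\ll_\epsilon \Sigma$: although the uniqueness of $H$ for a given $F$ is immediate from the smooth/rough decomposition, controlling the number of AP-compatible $b$-degree divisors of $F$ sharing the same smooth part requires a careful divisor-counting argument on the rough part $F/H$, and this is where the thresholds $\epsilon b/7$ and $b'\asymp b$ must be chosen in balance.
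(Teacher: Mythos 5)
Your proposal takes a genuinely different route from the paper's proof, and it has a real gap at the crucial step; let me explain both.

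The paper engineers a \emph{unique} representation. It writes $F=HPB$ where $H$ is the smooth part with $\deg H\le \epsilon b/7$, $P$ is a \emph{single} prime with $\deg P\in[(1-\epsilon/7)b,\,b]$, and $B$ is required to have all prime divisors of degree in $\mathcal{J}=[\tfrac{2\epsilon}{7}b,(1-\tfrac{2\epsilon}{7})b]$ or $>b$. The deliberate gap $((1-\tfrac{2\epsilon}{7})b,\,b]$ forbidden to $B$'s primes, together with the placement of $\deg P$ inside it, makes $P$ the unique prime of $F$ in that range, so the triple $(H,P,B)$ is uniquely recoverable from $F$; the lower bound is then obtained by a straight first-moment count, with no Cauchy--Schwarz. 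The count of eligible $B$'s is handled by two cases ($\deg B>b$ via Theorem~\ref{lm:rough_polynomialsAP}, $\deg B\le b$ via primes of degrees in $\mathcal{I}$ and $\deg B-\mathcal{I}$), and $B$ is not required to be rough.

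Your parametrization by $(H,d,R_1,R_2)$ with $R_1,R_2$ both $b'$-rough for $b'=b(1-2\epsilon)/(1-\epsilon)$ is correct as far as it goes (and your identity $\deg M_i\le(1/2-\epsilon)b \iff \deg M_i\le(1/2-\epsilon/2)b'$ checks out). Note also that $R_1$ is automatically a single prime, since $\deg R_1\ge b(1-\epsilon/7)$ and any product of two primes each of degree $>b'$ has degree $>2b'>b$; so your construction secretly has the same $(H,\text{prime},\text{cofactor})$ shape. The place where the approaches truly diverge is that you impose no degree-avoidance on the primes of $R_2$, so $R_2$ may itself have primes in $(b',b]$ and the map $(H,d,R_1,R_2)\mapsto F$ need not be injective. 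You compensate with Cauchy--Schwarz, and the entire weight of the argument then falls on the unproved estimate $\sum_F \mathrm{mult}(F)^2\ll_\epsilon\Sigma$. This is not ``once again accessible through Theorem~\ref{lm:rough_polynomialsAP}'': that theorem counts rough polynomials in a single arithmetic progression, while the second moment requires bounding, for each $F$, the number of primes of $F/H$ with degree in the short window $[b(1-\epsilon/7),b]$ lying in prescribed residue classes and then squaring and summing — a joint-distribution question about divisors of $F$ in a short interval, which is precisely the flavor of quantity the multiplication-table problem is about and is not free. You correctly flag this as the main obstacle, but it is a genuine missing piece of the argument, not a routine detail, and the paper's construction is specifically designed to avoid it. To repair your proof you would either need to carry out the second-moment bound in full (nontrivial and of comparable difficulty to the rest of the lemma) or, more simply, add a degree-avoidance constraint on the primes of $R_2$ as the paper does, which restores injectivity and eliminates the need for Cauchy--Schwarz.
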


\begin{proof}
    We begin with the proof of the first assertion. Let $\mathcal{J} := [\frac{2\epsilon}{7} b,(1-\frac{2\epsilon}{7}) b]$.
    Consider the set of polynomials of degree $n$ of the form $F=HPB$, such that
    \begin{enumerate}[label=\arabic*.]
        \item $\deg H \leq \frac{\epsilon}{7} b$ and  $(H,M)=1$. \label{cond H}
        \item There exists $G_1 | H$ such that $\deg P = b - \deg G_1$, and $P G_1 \equiv A_1 \mod M_1$. \label{cond P} Put $E:=A_2G_1H^{-1}\mod M_2$.
        \item $B \equiv A_2 G_1H^{-1} 
        \mod M_2$ and every prime divisor $Q$ of $B$ satisfies $\deg Q \in \mathcal{J}$ or $\deg Q > b$. \label{cond B}
    \end{enumerate}
    Condition~\ref{cond P}\ implies that $F\in H(n,b;A_1,A_2,M_1,M_2)$ (with the divisors $G_1P$ and $HB/G_1$). It also implies that $(1-\frac{\epsilon}{7}) b \leq \deg (P)\leq b$. Therefore, this representation is unique and it suffices to bound from below the number of such triples $(H,P,B)$. 
    
    We estimate the number of $B$ for given $H$ and $P$ that satisfy \ref{cond H}\ and \ref{cond P} 
    Since $\deg P \leq b$ and $\deg H \leq \frac{\epsilon}{7}b$ we have $\deg HP\leq (1+\frac{\epsilon}{7})b$, and therefore $\deg B\geq (1-\frac{\epsilon}{7})b$.
    
    The number of $B$ with $\deg B>b$ is bounded below by the number of $b$-rough polynomials of degree $n-\deg HP$ in the arithmetic progression $A_2G_1H^{-1}\mod M_2$.  
    Since $(HP,M_2)=1$ and $\deg M_2 \leq (\frac{1}{2}-\epsilon)b \leq (\frac{1}{2}-\epsilon)\deg B$, Theorem~\ref{lm:rough_polynomialsAP} yields the lower bound
    \[
        \Psi(n-\deg HP,b; A_2G_1H^{-1}, M_2)
        \gg_{\epsilon} \frac{q^n}{\Phi(M_2)}\cdot \frac{1}{b|HP|}.
    \] 
    
    Next, assume that $\deg B \leq b$. Thus, $B$ does not have prime factors of degree $>b$, and by \eqref{cond B}, it has at least $2$ prime divisors in $\mathcal{J}$. Let $\mathcal{I} := [\deg B - b + \frac{3\epsilon}{7}b, \deg B - b + \frac{5 \epsilon}{7}b]$. Since $(1-\frac{\epsilon}{7})b \leq \deg B \leq b$, we have $\mathcal{I} \bigsubset \mathcal{J}, \deg B - \mathcal{I} \bigsubset \mathcal{J}$.
    Therefore, the number of such $B's$ is bounded from below by
    \begin{equation*}
        \Xi := \sum_{d_1 \in \mathcal{I}}
        \sum_{\substack{P_1 \textnormal{ is prime} \\ \deg P_1 = d_1}} \pi_q(\deg B - d_1;(P_1)^{-1} E, M_2).
    \end{equation*}
    Notice that $\deg M_2 \leq (\frac{1}{2} - \epsilon)b$, and for $d_2 \in \deg B - \mathcal{I}$ we have $d_2 \geq (1-\frac{5\epsilon}{7})b$. Thus, by \eqref{PPT} and \eqref{PPTAP} we obtain
    \begin{align}
    \begin{split}
        \Xi &\gg_{\epsilon} \sum_{d_1 \in \mathcal{I}}
        \sum_{\substack{P_1 \textnormal{is prime} \\ \deg P_1 = d_1}} \frac{q^{\deg B - d_1}}{\Phi(M_2) \cdot (\deg B - d_1)}  \\ 
        &\gg \sum_{d_1 \in \mathcal{I}} \frac{q^{\deg B}}{\Phi(M_2)} \cdot \frac{1}{d_1(\deg B - d_1)}  
        \gg_{\epsilon} \frac{q^n}{\Phi(M_2)} \cdot \frac{1}{b|HP|}.
    \end{split}
    \end{align}

    In summary, for any choice of such $P$ and $H$, the number of corresponding $B$'s is $\gg_{\epsilon} \frac{q^n}{\Phi(M_2)} \cdot \frac{1}{b|HP|}$.
    Therefore,
    \begin{align*}
        |H(n,b; A_1,A_2,M_1,M_2)| &\gg_{\epsilon} \sum_{\substack{\deg H \leq \frac{\epsilon}{7}b \\ (H,M)=1}} \sum_{\substack{b-\deg P \in \mathcal{L}(H) \\ P \equiv A_1G_1^{-1} \mod M_1}} \frac{q^n}{b \cdot \Phi(M_2)|HP|}  \\
        &\gg_{\epsilon} \frac{q^n}{b \cdot\Phi(M_2)} \sum_{\substack{\deg H \leq \frac{\epsilon}{7}b \\ (H,M)=1}} \frac{1}{|H|} \sum_{\substack{b-\deg P\in \mathcal{L}(H) \\ P \equiv A_1G_1^{-1} \mod M_1}} \frac{1}{|P|}.
    \end{align*}
    We bound the inner sum. Fix $H$ that satisfy \ref{cond H}
    Then,
    \[
        \sum_{\substack{P \\ b- \deg P \in\mathcal{L}(H) \\ P \equiv A_1D^{-1} \mod M_1}} \frac{1}{|P|} = \sum_{\substack{d \\ b-d\in\mathcal{L}(H)}} \frac{\pi_q(d; A_1G_1^{-1}, M_1)}{q^d} \gg_{\epsilon}
        \frac{1}{\Phi(M_1)} \sum_{\substack{d \\ b-d\in\mathcal{L}(H)}} \frac{1}{d} \gg \frac{1}{\Phi(M_1)} \cdot \frac{L(H)}{b},
    \]
    by \eqref{PPTAP}, using $\deg M_1 \leq  (\frac{1}{2} - \epsilon) b$, $d \geq (1-\frac{\epsilon}{7})b$. This finishes the proof of the first assertion.
    
    For the second assertion, repeat the above argument without restricting $PG_1\equiv A_1 \pmod{M_1}$. This gives
    \begin{align*}
        |H(n,b; A_1,A_2,M_1,M_2)| &\geq \sum_{\substack{\deg H \leq \frac{\epsilon}{7}b \\ (H,M)=1}} \sum_{b-\deg P \in \mathcal{L}(H)} |\{ \mbox{eligible $B$}\}|  \\
        &\gg_{\epsilon} \frac{q^n}{\Phi(M_2)} \cdot \frac{1}{b} \sum_{\substack{\deg H \leq \frac{\epsilon}{7}b \\ (H,M)=1}} \frac{1}{|H|} \sum_{b-\deg P\in \mathcal{L}(H) } \frac{1}{|P|}.
    \end{align*}
    In this case,
    \[
        \sum_{\substack{P \\ b- \deg P\in\mathcal{L}(H)}} \frac{1}{|P|} = \sum_{\substack{d \\ b-d\in\mathcal{L}(H)}} \frac{\pi_q(d)}{q^d} \gg_{\epsilon}
         \sum_{\substack{d \\ b-d\in\mathcal{L}(H)}} \frac{1}{d} \gg\frac{L(H)}{b},
    \]
    and the proof is completed.
\end{proof}

\section{Average Number of Degrees of Divisors} \label{sec:avg_deg_div}
The goal of this section is to bound from below the sums that appear in Lemma~\ref{lm:main_lower_bound}.
\begin{proposition}\label{lower_bound}
    Let $0<\epsilon, \eta$, $0<C$, and let $1 \ll_{C,\epsilon,\eta} b=b(n)\leq n/2$. Then,
    \[
        \frac{q^{n}}{b^2} \sum_{\substack{ \deg H \leq \eta b \\ (H,M)=1}}  \frac{L(H)}{|H|} \gg_{C,\epsilon, \eta} \frac{q^n}{b^\delta(1+\log(b))^{3/2}}, \qquad q^n \to \infty,
    \]
    uniformly over all $M\in \FF_q[T]$ such that $\deg M \leq b$ and $P_i(M)\leq C \cdot \frac{q^{i}}{i^{1+\epsilon}}$ for all $i\in\NN$.
\end{proposition}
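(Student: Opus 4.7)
The plan is to adapt Ford's combinatorial argument \cite{Ford_MT}, in its function field incarnation sketched by Meisner \cite{Meisner_MT}, while threading the coprimality condition $(H,M)=1$ through by means of hypothesis~\ref{eq:consition_Pj}. We restrict the sum on the left to squarefree products $H = P_1 \cdots P_k$ of distinct primes, where $\deg P_j$ lies in prescribed disjoint intervals $I_j$ with $\sum_j \sup I_j \leq \eta b$. For such $H$ one has $\mathcal{L}(H) = \{\sum_{j \in S} \deg P_j : S \subseteq \{1,\ldots,k\}\}$, so lower-bounding $L(H)$ becomes an anti-concentration question on subset sums of the degrees.

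\textbf{Absorbing the coprimality into the constant.} For each degree $j$, the number of monic primes of degree $j$ coprime to $M$ equals $\pi_q(j) - P_j(M)$, and by \eqref{PPT} together with condition~\ref{eq:consition_Pj},
\[
    \pi_q(j) - P_j(M) \geq \frac{q^j}{j} - \frac{2q^{j/2}}{j} - \frac{C q^j}{j^{1+\epsilon}} \gg \frac{q^j}{j},
\]
once $j \geq j_0(C,\epsilon)$. We choose the intervals $I_j$ to lie above $j_0$, which is permissible for $b \gg_{C,\epsilon,\eta} 1$ since the optimal $k$ below is $\asymp \log b$. Consequently the constraint $(P_j,M)=1$ removes at most a bounded proportion of primes at each scale, and the restriction $(H,M)=1$ enters only through a multiplicative constant depending on $C,\epsilon,\eta$.

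\textbf{Reduction to the unconditional Ford estimate.} After the previous reduction one is left to establish the function field Ford bound
\[
    \sum_{\deg H \leq \eta b} \frac{L(H)}{|H|} \gg_\eta \frac{b^{2-\delta}}{(1+\log b)^{3/2}}.
\]
Choosing $k \asymp \log b$ so that $2^k \asymp b^{1-\delta}$ and placing the intervals $I_j$ on a geometric scale, the number of admissible $k$-tuples $(P_1,\ldots,P_k)$ is controlled via \eqref{PPT}. The key combinatorial input is an anti-concentration inequality of Ford type: for a positive proportion of admissible tuples, the $2^k$ subset sums $\sum_{j \in S} \deg P_j$ produce at least $\gg 2^k/k^{3/2}$ distinct values, i.e.\ $L(P_1 \cdots P_k) \gg 2^k/k^{3/2}$. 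Combining the tuple count with this divisor-density contribution and summing yields the claimed lower bound; the exponent $\delta$ and the $(\log b)^{3/2}$ factor arise from the balance in the choice of $k$.

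\textbf{Main obstacle.} The technical heart is the anti-concentration step, which controls the count of distinct subset sums of prime degrees in the selected intervals, and is where both $\delta = 1 - (1+\log\log 2)/\log 2$ and the $(\log b)^{3/2}$ factor appear. Since it depends only on the joint distribution of the $\deg P_j$ and is insensitive to coprimality with $M$, it may be imported essentially verbatim from the Ford/Meisner framework (now made rigorous since the rough-polynomial gap is repaired in Section~\ref{section:rough polynomials}). The genuinely new work compared to \cite{Meisner_MT} is the coprimality reduction of the second paragraph, for which condition~\ref{eq:consition_Pj} is tailored exactly so that each relevant prime count $\pi_q(j) - P_j(M)$ remains a constant multiple of $\pi_q(j)$.
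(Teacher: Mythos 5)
Your outline tracks the broad strategy of the paper (restrict to squarefree products of primes in geometrically scaled intervals, use condition~\ref{eq:consition_Pj} to make the coprimality with $M$ cost only a bounded factor at each scale). However, your description of the central combinatorial step is not what the paper does, and as stated it contains a gap.

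The paper does not establish a pointwise anti-concentration bound of the form ``for a positive proportion of admissible tuples, $L(P_1\cdots P_k)\gg 2^k/k^{3/2}$.'' Instead it introduces the second-moment quantity $W(H)=\sum_{d\in\mathcal{L}(H)}\tau_d(H)^2$ and applies Cauchy--Schwarz to the decomposition $\tau(H)=\sum_{d\in\mathcal{L}(H)}\tau_d(H)$ to obtain, over each block $\mathcal{A}(v)$,
\[
\sum_{H\in\mathcal{A}(v)}\frac{L(H)}{|H|}\;\geq\;\frac{\bigl(\sum_{H\in\mathcal{A}(v)}\tau(H)/|H|\bigr)^2}{\sum_{H\in\mathcal{A}(v)}W(H)/|H|}.
\]
It then upper-bounds $\sum W/|H|$ (Lemma~\ref{lm:W(A)}) and lower-bounds $\sum\tau/|H|$ (Lemma~\ref{lm:tau(A)}), and the delicate input imported from Ford \cite[p.~8]{ford_H_y_2y} is the purely combinatorial estimate $\sum_{v\in\mathcal{B}}\frac{1}{b_N!\cdots b_J!\,f(v)}\gg k^{-3/2}$, not a statement about distinct subset sums of individual tuples. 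Your pointwise version would require a separate proof: the Cauchy--Schwarz bound controls the weighted average of $L$, not the size of $L(H)$ for a positive proportion of $H$, and $W$ is a second moment of $\tau_d$, not of $L$, so no Paley--Zygmund-type argument turns the paper's estimates into your claim. In short, the step you flag as ``importable essentially verbatim'' is exactly what Ford's method is designed to avoid, and the second-moment detour through $W(H)$ is the missing mechanism in your proposal.

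A secondary, smaller point: the paper does not fix degree intervals $I_j$ in advance and then delete primes dividing $M$. It defines the endpoints $\lambda_j$ \emph{adaptively} so that $\sum_{P\in E_j}|P|^{-1}$ is close to $\log 2$ \emph{after} removing the primes of $M$, and then discards the first $N-1$ intervals by requiring $b_1=\cdots=b_{N-1}=0$. This bakes coprimality into the geometry of the intervals and is what makes $\lambda_j\asymp 2^j$ with constants depending only on $C,\epsilon$; your version (fix intervals, then argue a constant factor survives) is workable but would require you to re-verify that the reciprocal sums over $E_j$ stay bounded above and below by constants independent of $M$, which is what the adaptive choice gives for free.
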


Let $H \in \mathcal{M}$. We denote by $\tau(H)$ the number of monic divisors of $H$, and by $\tau_d(H)$ the number of monic divisors of $H$ of degree $d$, so that $\tau(H) = \sum_{d\geq 0} \tau_d(H)$. Moreover, we let
\begin{equation}\label{eq:W(H) def}
    W(H) := \sum_{d\in \mathcal{L}(H)} \tau_d^2(H) = |\left\{(D,D'):D,D'|H, \deg (D) = \deg (D') \right\} |
\end{equation}
We consider only monic divisors in all definitions above.
We will be interested in the number of primes of a given degree which are relatively prime to $M$, 
\[ 
    \pi'_q(i) := |\{ P \in \mathcal{M}_i : P \mbox{ is prime and } P \nmid M \}| = \pi_q(i)-P_i(M).
\]
The assumption $P_i(M)\leq Cq^{i}/i^{1+\epsilon}$ and \eqref{PPT}, implies that 
\begin{equation}\label{eq:primes_not_m}
    \pi'_q(i) = \frac{q^i}{i} + O\ps{\frac{q^i}{i^{1+\epsilon}}}, \quad  \forall i\in \NN.
\end{equation}

Define the sequence of integers $1=\lambda_1 < \lambda_2<\cdots $ inductively by the property that $\lambda_j$ is the largest integer such that 
\begin{equation}\label{eqn:Ebound}
    \sum_{\substack{\deg P\in (\lambda_{j-1},\lambda_j] \\ P \nmid M}}|P|^{-1}\leq \log 2.
\end{equation}
By \eqref{eq:primes_not_m}, $\displaystyle\sum_{\deg P>\lambda_{j-1}}|P|^{-1} \gg \sum_{i>\lambda_{j-1} }i^{-1}=\infty$, so $\lambda_j$ exists. By \eqref{PPT}, $\displaystyle\sum_{\deg P = \lambda_{j-1}+1} |P|^{-1} \leq (\lambda_{j-1}+1)^{-1}< \log 2$, and hence $\lambda_j>\lambda_{j-1}$ (in particular, $\lambda_j>j$). 
Let $E_j$ be the set of prime polynomials $P\nmid M$ such that $\deg P\in (\lambda_{j-1},\lambda_{j}]$.

We apply \eqref{eq:primes_not_m} to estimate the sum of reciprocal of elements in $E_j$:
\begin{align*}
    \sum_{P\in E_j} \frac{1}{|P|} &= \sum_{i=\lambda_{j-1}+1}^{\lambda_j} \frac{\pi'_q(i)}{q^i} 
    = \sum_{i=\lambda_{j-1}+1}^{\lambda_j} \frac{1}{i} + O\ps{\frac{1}{i^{1+\epsilon}}} \\
     &=\log(\lambda_j) - \log(\lambda_{j-1}) + O\ps{\frac{1}{\lambda_{j-1}^{\epsilon}}}.  
\end{align*}
By \eqref{eqn:Ebound}, $\log(\lambda_j) - \log(\lambda_{j-1}) + O\ps{\frac{1}{\lambda_{j-1}^{\epsilon}}}\leq \log 2$ and from the maximality of $\lambda_j$ and the fact that $\sum_{\deg P=\lambda_{j}+1} = O(\lambda_{j}^{-1})$, we get $\log(\lambda_j) - \log(\lambda_{j-1}) + O\ps{\frac{1}{\lambda_{j-1}^{\epsilon}}}\geq \log 2$. 

Therefore,  
there exists some constant $K = K(C,\epsilon)$ such that for all $j$,
\begin{equation}\label{eqn:lambda_bound}
    2^{j-K} \leq \lambda_j \leq 2^{j+K}.
\end{equation}

Finally, for a tuple of integer $v = (b_1,...,b_J)$, let $\mathcal{A}(v)$ be the set of squarefree monic polynomials with exactly $b_j$ prime divisors from the set $E_j$, $j=1,\ldots, J$ and no other prime factors.
\begin{lemma}\label{lm:W(A)}
For a given $v$ with $b_j\leq Cq^j/j^{1+\epsilon}$ for all $j=1,\ldots, J$,  we have
\[
    \sum_{H\in \mathcal{A}(v)} \frac{W(H)}{|H|} \ll_{C,\epsilon} \frac{(2\log(2))^{b_1+\dots+b_J}}{b_1!\dots b_J!} \sum_{j=1}^{J} 2^{-j+b_1+\dots+b_j},
\]
as $q^n\to \infty$.
\end{lemma}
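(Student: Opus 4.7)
The plan is to expand $W(H)$ combinatorially and then analyze the resulting sum level by level. For squarefree $H \in \mathcal{A}(v)$ with $r = b_1 + \cdots + b_J$ prime factors, every pair of divisors $(D,D')$ with $\deg D = \deg D'$ is encoded by the pair $(T^+, T^-)$ of disjoint subsets of the primes of $H$ with $\deg T^+ = \deg T^-$ (representing $D\setminus D'$ and $D'\setminus D$), together with a free choice of the common part among the remaining $r - |T^+| - |T^-|$ primes. This gives
\[
W(H) = \sum_{\substack{T^+, T^- \text{ disjoint}\\ \deg T^+ = \deg T^-}} 2^{r - |T^+| - |T^-|}.
\]
Substituting into the left-hand side, interchanging summations, and parameterizing by the level profile $c_j^{\pm} := |T^{\pm} \cap E_j|$ (dropping the distinctness of primes of $H$ only gives an upper bound) reduces the task to
\[
\sum_{H \in \mathcal{A}(v)} \frac{W(H)}{|H|} \le \sum_{(c_j^{\pm})} \prod_j \frac{(2\sigma_j)^{b_j - c_j^+ - c_j^-}}{c_j^+!\, c_j^-!\, (b_j - c_j^+ - c_j^-)!} \cdot \Omega((c_j^{\pm})),
\]
where $\sigma_j := \sum_{P \in E_j} |P|^{-1}$ accounts for the free primes and
\[
\Omega((c_j^{\pm})) := \sum_{(P^{\pm}_{j,k})} \frac{\mathbf{1}[\sum \deg P^+ = \sum \deg P^-]}{\prod_{j,k} |P^+_{j,k}| \cdot \prod_{j,l} |P^-_{j,l}|},
\]
the sum ranging over ordered tuples $P^{\pm}_{j, \cdot} \in E_j^{c_j^{\pm}}$.

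The core step is a sharp upper bound on $\Omega$. Let $J^{\star} := \max\{j : c_j^+ + c_j^- > 0\}$ (with $J^{\star} = 0$ trivially giving $\Omega = 1$). For $J^{\star} \ge 1$, fix any one prime position at level $J^{\star}$, say $P^+_{J^{\star}, 1}$ (with the symmetric choice if $c_{J^{\star}}^+ = 0$), and sum the remaining primes freely; the constraint forces the degree $d^{\star}$ of the distinguished prime to a specific value, and by \eqref{eq:primes_not_m} together with \eqref{eqn:lambda_bound} its contribution is at most $\pi'_q(d^{\star})/q^{d^{\star}} \ll 1/d^{\star} \ll 2^{-J^{\star}}$ (and is zero if $d^{\star}$ falls outside $(\lambda_{J^{\star}-1}, \lambda_{J^{\star}}]$). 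Each remaining position contributes $\sigma_j \asymp 1$, since $\lambda_j$ is defined by \eqref{eqn:Ebound} to make $\sigma_j$ close to $\log 2$, yielding
\[
\Omega((c_j^{\pm})) \ll_{C, \epsilon} 2^{-J^{\star}} \prod_{j = 1}^{J^{\star}} \sigma_j^{c_j^+ + c_j^-}.
\]

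Plugging this back, using $\sigma_j \le \log 2$, and splitting the outer sum by $j_0 = J^{\star}$, the inner sum factors across levels. For $j < j_0$ the multinomial identity $\sum_{c^+ + c^- \le b} \binom{b}{c^+, c^-, b - c^+ - c^-} 2^{-c^+ - c^-} = 2^b$ yields a factor $2^{b_j}$; at $j = j_0$, excluding the zero-contribution term gives $2^{b_{j_0}} - 1$; for $j > j_0$ the only term is $1$. Combining with the prefactor $(2\log 2)^r / \prod_j b_j!$ and summing the telescoping contributions produces
\[
\sum_{H \in \mathcal{A}(v)} \frac{W(H)}{|H|} \ll_{C, \epsilon} \frac{(2\log 2)^r}{\prod_j b_j!} \sum_{j_0 = 1}^{J} 2^{-j_0 + b_1 + \cdots + b_{j_0}},
\]
as required; the trivial $J^{\star} = 0$ contribution $\le (2\log 2)^r/\prod_j b_j!$ is absorbed since $\sum_{j=1}^J 2^{-j + b_1 + \cdots + b_j} \ge 2^{-1 + b_1} \ge 1/2$. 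The main obstacle is the sharp bound on $\Omega$: the heuristic that the constraint kills one degree of freedom at the top level is clear, but the rigorous estimate demands the uniform prime polynomial theorem \eqref{eq:primes_not_m} applied at the specific forced degree $d^{\star}$ (which is where the hypothesis $P_j(M) \le C q^j/j^{1+\epsilon}$ enters), the dyadic bounds \eqref{eqn:lambda_bound} to convert $1/d^{\star}$ into $2^{-J^{\star}}$, and the uniform lower bound $\sigma_j \gg 1$ ensuring that the free levels are not overestimated when replacing $(2\sigma_j)^{b_j - c_j^+ - c_j^-}$ by $(2\log 2)^{b_j - c_j^+ - c_j^-}$.
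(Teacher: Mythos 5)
Your proof is correct and takes essentially the same route as the paper's: expand $W(H)$ over pairs encoding equal-degree divisors, fix the constrained prime at the highest occupied level so that the prime polynomial theorem together with \eqref{eqn:lambda_bound} gives a factor $\ll 2^{-j_0}$, then sum the resulting geometric contributions to obtain $\sum_j 2^{-j+b_1+\cdots+b_j}$. The only real differences are bookkeeping --- the paper enumerates directly over pairs of subsets $Y,Z\subseteq\{1,\dots,B\}$ of prime positions and singles out $I=\max(Y\Delta Z)$, whereas you split into disjoint $T^{\pm}$ plus a free common part and track level profiles $c_j^{\pm}$ --- together with one small slip: as stated, $\Omega\ll 2^{-J^{\star}}\prod_j\sigma_j^{c_j^++c_j^-}$ carries one superfluous factor of $\sigma_{J^{\star}}$ (the distinguished slot already contributes $\ll 2^{-J^{\star}}$, not $\sigma_{J^{\star}}$), which implicitly requires $\sigma_{J^{\star}}\gg 1$; using only the upper bound $\sigma_j\le\log 2$ (no lower bound on $\sigma_j$ is actually needed, contrary to what you suggest at the end) still yields the claimed estimate after the same multinomial summation.
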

\begin{proof}
    Let $B = b_1+ \dots + b_J$. Each $H \in \mathcal{A}(v)$ has a factorization $H = \prod_{i=1} ^ {B} P_i$ such that
    \begin{equation}\label{eqn:intervals}
        P_1, \dots , P_{b_1} \in E_1, P_{b_1 + 1}, \dots , P_{b_1+b_2} \in E_2, \ldots
    \end{equation}
    For later use, for an $1\leq i\leq B$, we write $j_0(i)$ for the index such that $P_i\in E_{j_0(i)}$.
    Then, $W(H)$ equals the number of subsets $Y,Z \subseteq \{1,...,B\}$ such that
    \begin{equation}\label{eqn:simdiv}
        \sum_{i\in Y} \deg P_i = \sum_{i \in Z} \deg P_i.
    \end{equation}
    Changing the order of summation gives the following.
    \begin{equation}\label{eqn:upperbound}
        \sum_{H \in \mathcal{A}(v)} \frac{W(H)}{|H|} \leq \frac{1}{b_1! \dotsb b_J!} \sum_{Y,Z \subseteq \{1, \dotsc ,B\}} \mathop{{\sum}'}_{(P_1,\dotsc,P_B)}\frac{1}{|P_1| \dotsb |P_B|},
    \end{equation}
    whereas the ${\sum}'$ indicates that $P_1,\ldots, P_B$  are the prime factors of $H$ in the above factorization.
    First, consider the diagonal term, where $Y=Z$,
    \begin{equation}
        \sum_{Y \subseteq \{1, \dotsc ,B\}} \mathop{{\sum}'}_{(P_1,\dotsc,P_B)}\frac{1}{|P_1| \dotsb |P_B|} \leq \sum_{Y \subseteq \{1,\dotsc ,B\}} \prod_{j=1}^{J} \left( \sum_{P_j \in E_j} \frac{1}{|P_j|} \right)^{b_j} \leq (2 \log (2))^B,
    \end{equation}
    where the last inequality is due to \eqref{eqn:Ebound}.
    
    When $Y \neq Z$, we let $I := \max (Y \Delta Z)$, where $\Delta$ denotes the symmetric difference. For all fixed $P_i$, $i \in (Y\cup Z)\smallsetminus\{ I\}$. The degree $d = d(P_i, i\neq I)$ of any suitable choice of $P_I$ is independent of the choice of $P_I$ itself. 
    Write $j_0:= j_0(I)$. By \eqref{eqn:lambda_bound} we have 
    \[
        d = \deg (P_I) \geq \lambda_{j_0-1} \gg_{C,\epsilon} 2^{j_0}.
    \]
    Thus, by \eqref{PPT},
    \begin{equation}
        \sum_{\deg P_I=d} \frac{1}{|P_I|} \leq \frac{\pi_q(d)}{q^{d}} \ll \frac{1}{d} \ll_{C,\epsilon} 2^{-j_0}.
    \end{equation}
    This implies that for fixed $Y \neq Z$ we have
    \begin{equation}\label{eq:boubndsum'}
    \begin{split}
        \mathop{{\sum}'}_{(P_1,\dotsc,P_B)}\frac{1}{|P_1| \dotsb |P_B|}  &\leq
        \sum_{i\neq I} \sum_{P_i\in E_{j_0(i)}} \prod_{i\neq I}\frac{1}{|P_i|} \sum_{\deg P_I=d(P_i:i\neq I)} \frac{1}{|P_I|}\\
        &\leq 2^{-j_0} \sum_{i\neq I} \sum_{P_i} \prod_{i\neq I}\frac{1}{|P_i|} \leq 2^{-j_0} \prod_{i\neq j_0} \left( \sum_{P_j \in E_j} \frac{1}{|P_j|} \right)^{b_j} \\
        &\ll (\log 2)^B 2^{-j_0}
    \end{split}
    \end{equation}
        
    The number of subsets $Y\neq Z$ with $I=\max(Y\Delta Z)$ is $2^{B+I-1}$. 
    So, together with \eqref{eq:boubndsum'}, we deduce that 
    \begin{align*}
        \sum_{H \in \mathcal{A}(v)} \frac{W(H)}{|H|} &\ll_{C,\epsilon} \frac{(2\log(2))^B}{b_1 ! \dotsb b_J !} \left(1 + \sum_{I=1}^B 2^{-j_0(I)} 2^{I-1}\right) \\
        &\ll \frac{(2\log(2))^B}{b_1 ! \dotsb b_J !} \sum_{j=1}^J 2^{-j} \sum_{I: j_0(I) = j} 2^I 
        \ll \frac{(2\log(2))^B}{b_1 ! \dotsb b_J !} \sum_{j=1}^J 2^{-j+b_1+\dotsb +b_j},
    \end{align*}
    where the last inequality follows from the fact that $j_0(I)=j$ if and only if $b_1+\dotsb+b_{j-1} < I \leq b_1+ \dotsb + b_j$.
    
\end{proof}

\begin{lemma}\label{lm:tau(A)}
    Suppose that there exists $N>0$ such that $b_i=0$ for $i<N$ and $b_j \leq Nj$ for $j\leq J$. Then,
    \[
        \sum_{H\in\mathcal{A}(v)} \frac{\tau (H)}{|H|} \gg_{N,C,\epsilon} \frac{(2 \log(2))^{b_N+\dots+b_J}}{b_N!\dots b_J!},
    \]
as $q^n \to \infty$.
\end{lemma}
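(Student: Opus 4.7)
The plan is to reduce the sum to a product of elementary symmetric polynomials and then estimate each factor by an inclusion-exclusion argument. Since every $H \in \mathcal{A}(v)$ is squarefree with $B := b_N + \dots + b_J$ prime factors, $\tau(H) = 2^B$, and hence
\[
\sum_{H \in \mathcal{A}(v)} \frac{\tau(H)}{|H|} = 2^B \prod_{j=N}^{J} e_{b_j}(\mathcal{X}_j),
\]
where $\mathcal{X}_j = \{1/|P| : P \in E_j\}$ and $e_k$ is the $k$-th elementary symmetric polynomial. Writing $(2\log 2)^B / \prod_j b_j! = 2^B \prod_j (\log 2)^{b_j}/b_j!$, the claim reduces to
\[
\prod_j e_{b_j}(\mathcal{X}_j) \gg_{N,C,\epsilon} \prod_j \frac{(\log 2)^{b_j}}{b_j!}.
\]

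For each $j$, set $T_j = \sum_{P \in E_j} 1/|P|$ and $U_j = \sum_{P \in E_j} 1/|P|^2$. Expanding $b_j! \, e_{b_j}(\mathcal{X}_j)$ as the sum over ordered $b_j$-tuples of distinct primes in $E_j$ and bounding the excluded ``repeat'' tuples via a union bound on pairs yields
\[
b_j! \, e_{b_j}(\mathcal{X}_j) \geq T_j^{b_j}\Bigl(1 - \binom{b_j}{2}\, U_j / T_j^2 \Bigr).
\]
From the construction of $\lambda_j$ and \eqref{eq:primes_not_m} we have $T_j = \log 2 + O_{C,\epsilon}(\lambda_{j-1}^{-\epsilon})$, while \eqref{PPT} gives $U_j \ll (\lambda_{j-1}\, q^{\lambda_{j-1}})^{-1}$. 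Combined with $\lambda_{j-1} \geq 2^{j-1-K}$, this shows $U_j$ decays doubly exponentially in $j$.

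I would then split the product at some threshold $j_0 = j_0(N,C,\epsilon)$. For $j \geq j_0$, the quantity $\binom{b_j}{2}U_j/T_j^2 \ll (Nj)^2 U_j/(\log 2)^2$ tends to zero fast enough that $\prod_{j \geq j_0}(1 - \binom{b_j}{2}U_j/T_j^2) \gg_{N,C,\epsilon} 1$; also $\sum_j b_j \lambda_{j-1}^{-\epsilon} \leq \sum_j Nj \cdot 2^{-(j-1-K)\epsilon}$ converges, so $\prod_{j \geq j_0}(T_j/\log 2)^{b_j} \gg_{N,C,\epsilon} 1$; multiplying yields the required bound on the tail. For the finitely many $j \in [N, j_0)$, $b_j \leq N j_0$ is bounded, so it suffices to show $e_{b_j}(\mathcal{X}_j) \gg_{N,C,\epsilon} (\log 2)^{b_j}/b_j!$ uniformly in admissible $(q, M)$. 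This follows from (a) the Poisson-type limit $b_j!\,e_{b_j}/T_j^{b_j} \to 1$ as $q \to \infty$, which handles all but a bounded set of $q$; and (b) for $q$ bounded, a direct uniform lower bound obtained by restricting to subsets of $b_j$ distinct primes of degree $\lambda_j$ in $E_j$, which exist in sufficient number by \eqref{eq:consition_Pj} combined with \eqref{PPT}.

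The hardest part I expect is making the small-$j$, small-$q$ case fully uniform: when $q=2$ and $j$ is close to $N$, the ratio $\binom{b_j}{2}U_j/T_j^2$ may exceed $1$, so the naive union bound produces a useless (negative) lower bound for $b_j!\,e_{b_j}$. I expect this is resolved by either refining the inclusion-exclusion (keeping more terms from Newton's identities to exploit cancellations in the higher power sums $p_k$), or restricting within $E_j$ to primes of degree above a threshold where the second moment is small while $T_j$ retains size $\asymp \log 2$.
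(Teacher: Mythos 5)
Your reduction is exactly the paper's: both exploit $\tau(H)=2^{B}$ on $\mathcal{A}(v)$ to write the sum as $2^{B}\prod_{j}e_{b_j}(\mathcal{X}_j)$ and reduce the lemma to $\prod_{j}b_j!\,e_{b_j}(\mathcal{X}_j)\gg_{N,C,\epsilon}\prod_j(\log 2)^{b_j}$. Where you differ is the per-factor error estimate. The paper iteratively excludes already-chosen primes, getting
\[
b_j!\,e_{b_j}(\mathcal{X}_j)\ \geq\ \Bigl(\log 2-\tfrac{1}{\lambda_j+1}-\tfrac{b_j}{q^{\lambda_{j-1}}}\Bigr)^{b_j},
\]
and then controls the product with the Weierstrass inequality $\prod_i(1-x_i)^{w_i}\geq 1-\sum_i w_ix_i$. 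Your union bound on repeated pairs, $b_j!\,e_{b_j}\geq T_j^{b_j}\bigl(1-\binom{b_j}{2}U_j/T_j^2\bigr)$, is a legitimate alternative and is in fact slightly tighter (with $U_j\asymp 1/(\lambda_{j-1}q^{\lambda_{j-1}})$ your error term carries an extra factor $\asymp\lambda_{j-1}$ in the denominator compared to the paper's $b_j/q^{\lambda_{j-1}}$), so it converges to the same conclusion with less slack.

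On the difficulty you flag at the end: the paper faces the same potential issue — for $j$ near $N$ and $N$ small, the per-factor lower bound could become nonpositive — and resolves it silently by the fact that in the application (the proof of Proposition~\ref{lower_bound}) $N$ is chosen as a function of $C,\epsilon,\eta$ large enough that the correction stays well below $\log 2$. Since your implied constant is allowed to depend on $N$ and you control $U_j$ by $1/(\lambda_{j-1}q^{\lambda_{j-1}})$ with $\lambda_{j-1}\geq\max(j-1,\,2^{j-1-K})$, exactly the same ``take $N$ large'' resolution applies to your argument; the auxiliary two-case split you propose for a finite initial block of $j$ (a Poisson-type limit in $q$, plus a restriction to primes of degree $\lambda_j$) is therefore unnecessary. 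If you do keep it, note that your case (b) yields only $b_j!\,e_{b_j}\gg\lambda_j^{-b_j}$ rather than $(\log 2)^{b_j}$; that is still acceptable because $j$ (hence $\lambda_j$ and $b_j$) is bounded there and the loss is absorbed into the implied constant, but you should say so explicitly.
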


\begin{proof}
    If $H \in \mathcal{A}(v)$, then $\tau(H) = 2^{b_N+ \dotsb +b_J}$. Hence,
    \begin{equation}\label{boundtHoverH}
        \sum_{H\in\mathcal{A}(v)} \frac{\tau (H)}{|H|} = 2^{b_N+ \dotsb +b_J} \prod_{j=N}^J \frac{1}{b_j !} \left(\sum_{\substack{P_1, \dotsc, P_{b_j} \in E_j \\ P_i \textnormal{ distinct}}} \frac{1}{|P_1| \dotsb |P_{b_j}|}\right).
    \end{equation}
    By \eqref{PPT} and the choice of the sequence of $\lambda_j$'s, for $j \geq N$,
    \begin{align*}
        \sum_{P \in E_j} \frac{1}{|P|} \geq \log(2) - \sum_{\substack{\deg(P)=\lambda_{j+1} \\ P \nmid M}} \frac{1}{|P|} &\geq \log(2) - \frac{\pi_q(\lambda_{j+1})}{q^{\lambda_{j+1}}} 
        \geq \log(2) - \frac{1}{\lambda_{j+1}}.
    \end{align*}
    For $k\leq b_j$ and fix distinct $P_1, \dotsc ,P_k \in E_j$, we deduce that
    \begin{equation*}
        \sum_{\substack{P \in E_j \\ P \neq P_1, \dotsc ,P_k}} \frac{1}{|P|} = \sum_{P \in E_j}\frac{1}{|P|} - \sum_{j=1}^k \frac{1}{|P_i|} \geq \log(2) - \frac{1}{\lambda_{j+1}} - \frac{b_j}{q^{\lambda_{j-1}}}.
    \end{equation*}
    Hence, we may bound the product in \eqref{boundtHoverH}:
    \begin{align*}
        \prod_{j=N}^J \frac{1}{b_j !} \left(\sum_{\substack{P_1, \dotsc, P_{b_j} \in E_j \\ P_i \textnormal{ distinct}}} \frac{1}{|P_1| \dotsb |P_{b_j}|}\right) &\geq \prod_{j=N}^J \frac{1}{b_j !} \left( \log(2) - \frac{1}{\lambda_j+1} - \frac{b_j}{q^{\lambda_{j-1}}}\right)^{b_j} \\
        &= \frac{\log(2)^{b_N + \dotsb + b_J}}{b_N!\dots b_J!}  \prod_{j=N}^J \left( 1 - \frac{1}{\log(2)} \left(\frac{1}{\lambda_j+1} + \frac{b_j}{q^{\lambda_{j-1}}}\right)\right)^{b_j}.
    \end{align*}
    To conclude the proof, it remains to bound the right-hand product from below.
    And indeed, as 
    \[
        \frac{1}{\log(2)} \left(\frac{1}{\lambda_j+1} + \frac{b_j}{q^{\lambda_{j-1}}}\right) \leq A 2^{-j}, 
    \]
    for some constant $A=A(C,\epsilon)$, 
    by the Weierstrass inequality $\prod_{i}(1-x_i)^{w_i}\geq 1-\sum_{i}w_ix_i$ and the assumption $b_j\leq Nj$, we get that 
    \[
        \prod_{j=N}^J \left( 1 - \frac{1}{\log(2)} \left(\frac{1}{\lambda_j+1} + \frac{b_j}{q^{\lambda_{j-1}}}\right)\right)^{b_j} \geq 1-A N\sum_{j=N}^{J} j2^{-j} \gg_{N,C,\epsilon} 1, 
    \]
    as needed.
\end{proof}

\begin{proof}[Proof of Proposition \ref{lower_bound}]
    We have
    \[
        \tau (H) = \sum_{d \in \mathcal{L}(H)} \tau_d(H),
    \]
    and by 
    \eqref{eq:W(H) def}, we have
    \[
        W(H) = \sum_{d \in \mathcal{L}(H)} \tau_d(H)^2.
    \]
    
    For a nonempty finite set $\mathcal{A}\subseteq \FF_q[T]$ of monic polynomials, we apply the Cauchy-Schwarz's inequality in the following way.
    \begin{align*}
        \Big( \sum_{H \in \mathcal{A}} \frac{\tau(H)}{|H|} \Big) ^2 &= 
        \Big( \sum_{H \in \mathcal{A}} \sum_{d \in \mathcal{L}(H)} \frac{\tau_d(H)}{|H|} \Big) ^2  
        \leq
        \Big( \sum_{H \in \mathcal{A}} \sum_{d \in \mathcal{L}(H)} \frac{1}{|H|} \Big) \Big( \sum_{H \in \mathcal{A}} \sum_{d \in \mathcal{L}(H)} \frac{\tau_d(H)^2}{|H|} \Big) \\
        &= \Big( \sum_{H \in \mathcal{A}} \frac{L(H)}{|H|} \Big) 
        \Big( \sum_{H \in \mathcal{A}} \frac{W(H)}{|H|} \Big).
    \end{align*}
    So 
    \begin{equation}\label{concfromCS}
         \sum_{H \in \mathcal{A}} \frac{L(H)}{|H|}  \geq  \frac{ (\sum_{H \in \mathcal{A}} \frac{\tau(H)}{|H|})^2 }{ \sum_{H \in \mathcal{A}} \frac{W(H)}{|H|} }.
    \end{equation}
    Let $\mathcal{A}_1,...,\mathcal{A}_l$ be a collection of disjoint nonempty finite sets of polynomials of degrees $\leq \eta b$ that are relatively prime to $M$. We apply \eqref{concfromCS} to each of them to get 
    \begin{equation}\label{eqn:cs_geq}
        \frac{q^n}{b^2} \sum_{\substack{\deg(H) \leq \eta b \\ (H,M)=1}} \frac{L(H)}{|H|} \geq \frac{q^n}{b^2} \sum_{i \in I} \frac{( \sum_{H \in \mathcal{A}_i} \frac{\tau(H)}{|H|} ) ^2}{ \sum_{H \in \mathcal{A}_i} \frac{W(H)}{|H|} }.
    \end{equation}

    To apply \eqref{eqn:cs_geq}, we choose the sets $\mathcal{A}_i$ to be of the form $\mathcal{A}(v)$, for vectors $v$ starting with many zeros. 
    
    More precisely, 
    let $N=N(C,\epsilon,\eta)$ be an integer such that $N 2^{K+1-N}  \leq \eta$ (where $K$ is the constant defined in \eqref{eqn:lambda_bound}). Set $k = \lfloor\log_2(b)-2N\rfloor$ and $J := N+k-1$. Let $\mathcal{B}$ be the set of vectors $v = (b_1, \dotsc ,b_J)$ such that $b_1=\cdots =b_{N-1}=0$, $b_N+ \dotsb + b_J=k$, and $b_j \leq N \min(j,J-j+1)$. Using \eqref{eqn:lambda_bound}, for every $H \in \mathcal{A}(v)$, we have
    \begin{align*}
        \deg(H) 
            &\leq \sum_{j=N}^J b_j \lambda_j 
            \leq N2^{K+J+1} \sum_{j=N}^J (J-j+1)2^{j-J-1} \leq   
              N2^{K+J+1}\sum_{l=1}^{\infty} l 2^{-l} \\
        &= N2^{K+J+2} = N2^{K+1-N} \times 2^{2N+k} \leq 
        \eta b.
    \end{align*}
    Therefore, \eqref{eqn:cs_geq} applied to the sets $\mathcal{A}(v)$, $v\in\mathcal{B}$ gives
    \begin{equation}\label{eqn:lowerbdintermsofAv}
        \frac{q^n}{b^2} \sum_{\substack{\deg(H) \leq \eta b \\ (H,M)=1}} \frac{L(H)}{|H|} \geq \frac{q^n}{b^2} \sum_{v \in \mathcal{B}} \frac{\left( \sum_{H \in \mathcal{A}(v)} \frac{\tau(H)}{|H|} \right) ^2}{ \sum_{H \in \mathcal{A}(v)} \frac{W(H)}{|H|} }.
    \end{equation}
    To this end, let
    \[
        f(v) = \sum_{h=N}^J 2^{N-1-h+b_N + \dotsb +b_h}.
    \]
    Lower bounds on the denominators on the right-hand side of \eqref{eqn:lowerbdintermsofAv} are given by Lemma \ref{lm:W(A)}:
    \begin{equation}
        \sum_{H\in \mathcal{A}(v)} \frac{W(H)}{|H|} \ll_{C,\epsilon} \frac{(2\log(2))^k}{b_N!\dots b_J!} (1+2^{1-N} f(v) ) \ll \frac{(2\log(2))^k}{b_N!\dots b_J!} f(v)
    \end{equation}
    where the last inequality is true since $N$ is fixed.
    Upper bounds for the numerators are given by Lemma \ref{lm:tau(A)}. Thus, we conclude that 
    \[
        \frac{q^n}{b^2} \sum_{\substack{\deg(H) \leq \eta b \\ (H,M)=1}} \frac{L(H)}{|H|} 
        \gg_{C,\epsilon,\eta} \frac{q^n (2\log(2))^k}{b^2} \sum_{v \in \mathcal{B}} \frac{1}{b_N! \dotsb b_J! f(v)}.
    \]
    In \cite[Page 8]{ford_H_y_2y}, it is proven that for a sufficiently large $k$ we have
    \[
        \sum_{v \in \mathcal{B}} \frac{1}{b_N! \dotsb b_J! f(v)} \gg \frac{k^{k-1}}{k!} \gg \frac{1}{k^{3/2}}.
    \]
    Hence, since $k = \log(b) / \log(2) + O(1)$, we obtain that for sufficiently large $b$,
    \begin{equation*}
        \frac{q^{n}}{b^2} \sum_{\substack{\deg(H)\leq \eta b \\ (H,M)=1}}  \frac{L(H)}{|H|} \gg \frac{q^n}{b^\delta(1+\log(b))^{3/2}},
    \end{equation*}
    as stated.
\end{proof}

\section{Proofs of The Main Results}\label{sec:last}
We first consider Theorems~\ref{thm: divisors on APs} and \ref{thm:MTP on AP}:
\begin{proof}[Proof of Theorems \ref{thm: divisors on APs} and \ref{thm:MTP on AP}]
    First assume that $b\gg_{C,\epsilon} 1$, that is, that there exists $b_0=b_0(C,\epsilon)$ such that $b\geq b_0$. By Lemma~\ref{lm:main_lower_bound} we have
    \[
    \begin{split}
        |H(n,b;A,M)| &\gg_{\epsilon} \frac{1}{\Phi(M)} \cdot \frac{q^{n}}{b^2} \sum_{\substack{\deg(H)\leq \frac{\epsilon}{7} b \\ (H,M)=1}} \frac{L(H)}{|H|}, \qquad \mbox{ and }\\
        |H(n,b;A_1,A_2,M_1,M_2)| &\gg_{\epsilon} \frac{1}{\Phi(M_1)\cdot \Phi(M_2)} \cdot \frac{q^{n}}{b^2} \sum_{\substack{\deg(H)\leq \frac{\epsilon}{7} b \\ (H,M)=1}} \frac{L(H)}{|H|}, \qquad \mbox{ as } q^n\to\infty.
    \end{split}
    \]
    Applying the bound of Proposition~\ref{lower_bound} (with $\eta=\epsilon/7$) to the first inequality implies Theorem~\ref{thm:MTP on AP} and to the second implies Theorem~\ref{thm: divisors on APs} for $b\gg_{C,\epsilon}1$.

    Next, assume that $b \leq b_0$.
    If $n=2b$, then, by \eqref{PPTAP} we get
    \begin{align*}
        |H(n,b;A,M)| &\geq \frac{1}{2} \sum_{(E,M)=1} \pi_q(b,E,M) \cdot \pi_q(b,AE^{-1},M) 
        \gg_{\epsilon} \frac{q^n}{\Phi(M)}.
\\    
        |H(n,b;A_1,M_1,A_2,M_2)| &\geq \frac{1}{2} \pi_q(b,A_1,M_1) \cdot \pi_q(b,A_2,M_2) \gg_{\epsilon} \frac{q^n}{\Phi(M_1) \cdot \Phi(M_2)},
    \end{align*}
    and the proof is done. (Here and below $E^{-1}$ is a polynomial representative of the inverse of $E\mod M$.)
    
    To this end, assume $n>2b$. Let $\mathcal{M}_{k,E,M} := \{F \in \mathcal{M}_k: F \equiv E \mod M\}$. If $\deg M < k$, then $|\mathcal{M}_{k,E,M}| =q^k/|M|$.
    Consider the set of polynomials of the form $F = GH$ such that $G \in \mathcal{M}_{b}$, $H \in \mathcal{M}_{n-b}$, $P^-(H) > b$, and $F \equiv A \mod M$. This presentation is unique. By Theorem \ref{lm:rough_polynomialsAP},
    \begin{align*}
        |H(n,b;A,M)| &\geq \sum_{(E,M)=1} |\mathcal{M}_{b,E,M}| \cdot \Psi(n-b, b, AE^{-1}, M) 
        \\&\gg_{\epsilon} \sum_{(E,M)=1} \frac{q^{b}}{|M|} \cdot \frac{q^{n-b}}{b \cdot \Phi(M)}= \frac{q^n}{b \cdot |M|} 
        \geq \frac{q^n}{b_0 \cdot |M|} \asymp_{C,\epsilon} \frac{q^n}{\Phi(M)}.
    \end{align*}
    The last asymptotic equality is true because $b_0$ depends only on $C$ and $\epsilon$ and  Condition~\ref{eq:consition_Pj} implies that $|M| \asymp_{C, \epsilon} \Phi(M)$.
    Similarly, 
    \begin{align*}
        |H(n,b;A_1,M_1,A_2,M_2) &\geq |\mathcal{M}_{b,A_1,M_1}|\cdot \Psi(n-b, b, A_2, M_2) \\
        &\gg_{\epsilon} \frac{q^n}{b_0 \cdot |M_1| \cdot \Phi(M_2)} \asymp_{C,\epsilon}
        \frac{q^n}{\Phi(M_1) \cdot \Phi(M_2)}.
    \end{align*}
    So the proof is complete.
\end{proof}
\begin{proof} [Proof of Theorem \ref{thm:divisor on AP}]
    The sets
    \[
        \{H(n,b;A,A',M,M)\}_{A'\in(\FF_q[T]/M\FF_q[T])^*}
    \]
    are pairwise disjoint, since polynomials from different sets lie in different arithmetic progressions. Moreover, 
    \[
        \bigcup_{A'\in(\FF_q[T]/M\FF_q[T])^*} H(n,b;A,A',M,M) \bigsubset H'(n,b;A,M).
    \]
    Therefore, by Theorem~\ref{thm: divisors on APs},
    \begin{align*}
        |H'(n,b;A,M)| &\geq \sum_{A'\in(\FF_q[T]/M\FF_q[T])^*} |H(n,b;A,A',M,M)| \\
        &\gg_{C,\epsilon} \sum_{A'\in(\FF_q[T]/M\FF_q[T])^*} \frac{1}{(\Phi(M))^2} \cdot \frac{q^n}{b^\delta (1+\log b)^{3/2}} \\
        &= \frac{1}{\Phi(M)} \cdot \frac{q^n}{b^\delta (1+\log b)^{3/2}},
    \end{align*}
    as needed.
\end{proof}
\begin{remark}
    The argument used for the last proof could not be applied to bound $H(n,b;A,M)$,
    since the corresponding sets 
    \[
        \{H(n,b;A',A(A')^{-1},M,M)\}_{A'\in(\FF_q[T]/M\FF_q[T])^*}
    \]
    are not necessarily disjoint.
\end{remark}

\begin{proof}[Proof of Theorem \ref{thm:M is bounded}]
    Since $M_1, M_2, q$ are fixed, we have $\Phi(M_1) \cdot \Phi(M_2) = O(1)$. Therefore, the lower bound for the size of each of the sets follows from the respective theorem.  All sets are contained in $H(n,b)$, so the upper bound follows from \cite[Theorem 1.2]{Meisner_MT}.
\end{proof}

\bibliographystyle{plain}

\end{document}